\documentclass[11pt]{article}
\usepackage{amsfonts}
\usepackage{amssymb}
\usepackage{amsthm}
\usepackage{amsmath}
\usepackage{graphicx}
\usepackage{empheq}
\usepackage{indentfirst}
\usepackage{cite} 
\usepackage{mathrsfs}
\usepackage{cases}
\usepackage{graphics}
\usepackage{xcolor}  
\usepackage{amsmath,bm}

\textwidth=16.5cm \textheight=23cm \topmargin=-1cm \oddsidemargin=0cm 
\flushbottom
\newtheorem{theorem}{\textbf{Theorem}}[section]
\newtheorem{lemma}{\textbf{Lemma}}[section]
\newtheorem{proposition}{\textbf{Proposition}}[section]
\newtheorem{corollary}{\textbf{Corollary}}[section]
\newtheorem{remark}{\textbf{Remark}}[section]
\newtheorem{definition}{\textbf{Definition}}[section]
%
\allowdisplaybreaks[4] 
%

\def\be{\begin{equation}}
\def\ee{\end{equation}}
\def\bea{\begin{eqnarray}}
\def\eea{\end{eqnarray}}
\def\bt{\begin{theorem}}
\def\et{\end{theorem}}
\def\bl{\begin{lemma}}
\def\el{\end{lemma}}
\def\br{\begin{remark}}
\def\er{\end{remark}}
\def\bp{\begin{proposition}}
\def\ep{\end{proposition}}
\def\bc{\begin{corollary}}
\def\ec{\end{corollary}}
\def\bd{\begin{definition}}
\def\ed{\end{definition}}

%


\begin{document}

\title{Attractors for the Navier--Stokes--Cahn--Hilliard System with Chemotaxis and Singular Potential  in 2D}

\author{
	Jingning He
	\footnote{School of Mathematics, Hangzhou Normal University, Hangzhou 311121, P.R. China. Email: hejingning@hznu.edu.cn
	}}
\date{\today}
\maketitle


\begin{abstract}
\noindent We analyze the long-time behavior of solutions to a Navier--Stokes--Cahn--Hilliard system with chemotaxis effects and a solution-dependent mass source term. The fluid velocity $\bm{v}$ satisfies the Navier--Stokes system, the phase field variable $\varphi$ satisfies a convective Cahn--Hilliard equation with a singular potential (e.g., the Flory–Huggins type), the nutrient density $\sigma$ satisfies an advection-diffusion-reaction. For the initial boundary value problem in 2D, we prove the  existence of the global attractor in a suitable phase space. Furthermore, we obtain the existence of an exponential
attractor, and it can be implied that the global attractor is of finite fractal dimension. 
\medskip \\
\noindent
\textbf{Keywords:} Navier--Stokes--Cahn--Hilliard system, Chemotaxis, Singular potential, Attractors. \medskip \\
\medskip\noindent
\textbf{MSC 2010:} 35A01, 35A02, 35K35, 35Q92, 76D05.
\end{abstract}

\section{Introduction}
\setcounter{equation}{0}
\noindent
We investigate the following Navier--Stokes--Cahn--Hilliard type system
\begin{subequations}
\begin{alignat}{3}
&\partial_t  \bm{ v}+\bm{ v} \cdot \nabla  \bm {v}-\textrm{div} (2  \eta(\varphi) D\bm{v} )+\nabla p=(\mu+\chi \sigma)\nabla \varphi,\label{f3.c} \\
&\textrm{div}\ \bm{v}=0,\label{f3.c1}\\
&\partial_t \varphi+\bm{v} \cdot \nabla \varphi=\Delta \mu-M(x,\varphi),\label{f1.a} \\
&\mu=A\varPsi'(\varphi)-B\Delta \varphi-\chi \sigma,\label{f4.d} \\
&\partial_t \sigma+\bm{v} \cdot \nabla \sigma= \Delta (\sigma+\chi(1-\varphi)) +S, \label{f2.b}
\end{alignat}
\end{subequations}
in $\Omega\times(0,T)$. Here, $\Omega \subset\mathbb{R}^2$ is a bounded domain with smooth boundary $\partial\Omega$ and $T>0$ is a given final time of arbitrary magnitude. The system is subject to the following boundary conditions
\begin{alignat}{3}
&\bm{v}=\mathbf{0},\quad {\partial}_{\bm{n}}\varphi={\partial}_{\bm{n}}\mu={\partial}_{\bm{n}}\sigma=0,\qquad\qquad &\textrm{on}& \   \partial\Omega\times(0,T),
\label{boundary}
\end{alignat}
and the initial conditions
\begin{alignat}{3}
&\bm{v}|_{t=0}=\bm{v}_{0}(x),\ \ \varphi|_{t=0}=\varphi_{0}(x), \ \ \sigma|_{t=0}=\sigma_{0}(x), \qquad &\textrm{in}&\ \Omega.
\label{ini0}
\end{alignat}
Here, $\bm{n}=\bm{n}(x)$ denotes the unit outward normal vector on $\partial\Omega$.

System \eqref{f3.c}--\eqref{f2.b} can be regarded as a simplified version of the general thermodynamically consistent diffuse interface model in \cite{LW} where the authors study a two-component incompressible fluid mixture including a chemical species subject to diffusion and some significant transport mechanisms, like advection and chemotaxis (see also \cite{Sitka} and the references cited therein). In this system, the
state variables of the system are volume-averaged velocity $\bm{v}$, the (modified) pressure $p$, the difference in volume fractions of the binary mixture $\varphi$, the concentration of the chemical species $\sigma$, the chemical potential $\mu$ associated to $(\varphi, \sigma)$. Here, $D\bm{v}=\frac{1}{2}(\nabla\bm{ v}+(\nabla\bm{ v}) ^ \mathrm{T})$ denotes the symmetrized velocity gradient. The potential $\varPsi$ denotes the
physically relevant free energy density  given by the logarithmic type \cite{CH}:
\be
\varPsi (r)=\frac{\theta}{2}\big[(1-r)\ln(1-r)+(1+r)\ln(1+r)\big]+\frac{\theta_{0}}{2}(1-r^2),\quad \forall\, r\in[-1,1],
\label{pot}
\ee
with $0<\theta<\theta_{0}$ (see, e.g., \cite{CH,CMZ}), which is also known as Flory--Huggins potential.  

The source term $M(x,\varphi)$ in \eqref{f1.a} (see (H4)
below) is related to biological mechanisms like proliferation, elimination of cells (see e.g., \cite{L2022}). Further discussions on biologically relevant mass source terms can be found in \cite{Mi19} and the
references cited therein. The source term $M(x,\varphi)$ can be viewed as a general form of  Oono's type  $\alpha(\varphi-c_0)$ with $\alpha\geq 0$, $c_0\in(-1,1)$, which represents nonlocal interactions (e.g. reversible chemical reaction) between the two components (cf. \cite{GGM2017,Mi11}). Assuming that the components $\eta_1$, $\eta_2>0$ are viscosities of the two homogeneous fluids, the mean viscosity is modeled by the concentration dependent term $\eta=\eta(\varphi)$, for instance, a typical form is the linear combination:
\be
\eta(\varphi)=\eta_1\frac{1+\varphi}{2}+\eta_2\frac{1-\varphi}{2}.
\label{vis}
\ee
 The positive constant $A$ is the surface tension coefficient, and the positive coefficient $B$ is related to thickness of the interfacial layer (i.e., the diffuse interface). The constant $\chi$ is related to certain specific transport mechanisms such as chemotaxis and active transport in the context of tumor growth modelling (see e.g., \cite{GLSS, GL17e}). As it has been shown in \cite{H,LW}, the following basic energy law holds for the problem \eqref{f3.c}--\eqref{ini0}
\begin{align}
& \frac{d}{dt} \int_{\Omega} \Big[ \frac{1}{2}|\bm{v}|^2+A\varPsi(\varphi)+\frac{B}{2}|\nabla \varphi|^2+\frac{1}{2}|\sigma|^2+\chi\sigma(1-\varphi) \Big] dx \notag \\
& \qquad +\int_{\Omega} \Big[ 2\eta(\varphi)|D\bm{v}|^2 +m(\varphi) |\nabla \mu|^2+|\nabla(\sigma+\chi(1-\varphi))|^2\Big] dx\nonumber\\
&\quad  =\int_\Omega \left[-M(x,\varphi)\mu+(-\mathcal{C} h(\varphi) \sigma +S)(\sigma +\chi(1-\varphi))\right] dx,
\label{BEL}
\end{align}
which plays a significant role in the study of its global well-posedness.

The system \eqref{f3.c}--\eqref{ini0} with a regular potential was first investigated in \cite{LW}. The authors of \cite{LW} obtained the existence of global weak solutions in dimension two and three, the existence and uniqueness of global strong solutions in dimension two under an artificial assumption on the coefficients $A$ and $\chi$. This extra assumption was removed in the recent work \cite{H}, where the author considered a singular potential like \eqref{pot} and proved the existence of global weak solutions in two and three dimensions. Then the authors in \cite{H1} proves the existence and regularity properties of the unique global strong solution for problem \eqref{f3.c}--\eqref{ini0} in two dimensions.   

Neglecting the interaction due to nutrient and assuming $\alpha=0$, our problem \eqref{f3.c}--\eqref{f4.d} reduces to the well-known ``Model H" for two incompressible and
viscous Newtonian fluids \cite{HH, Gur}. This model has been extensively investigated in the literature under various hypotheses on mobility, fluid viscosity and potential function. For instance, for the Navier--Stokes--Cahn--Hilliard system, we refer to \cite{B,GG2010,ZWH} for the regular potential case, and to \cite{A2009,B,GMT} when the potential is singular (e.g., logarithmic). We refer to \cite{BGM,MT} for  the system with nonlocal effect due to Oono's interaction.  Concerning the long-time behavior of the NSCH system
 with regular potential, the existence of global and
exponential attractors was proved in \cite{GG2010}. The lower bound for the global attractor can be found in \cite{GG2011}, and the existence of a pullback exponential attractor was studied \cite{BG2014}. For the logarithmic potential case, we refer to \cite{A2009} for the stability of local minima of the energy and the existence of the global attractor, and to the recent work \cite{GT} for the existence of the exponential attractor. Neglecting the fluid interaction in system \eqref{f3.c}--\eqref{f4.d}, the well-posedness of a Cahn--Hilliard type system with chemotaxis was proved in \cite{GL17e}, the case with Dirichlet boundary conditions was investigated in \cite{GL17}. Neglecting chemotaxis and active transport effects, the existence of global attractors of some related diffuse interface models under a choice of the  regular potential was proved in \cite{FGR,MRS}. Concerning the existence of exponential attractors for the Cahn--Hilliard equation, we refer to \cite{MW,MZ04} and the references therein. 

Our aim is to continue the study of \cite{H,H1} and establish the long-time behavior of problem \eqref{f3.c}--\eqref{ini0} with the choice of a singular potential like \eqref{pot} in two dimensions from
the viewpoint of infinite dimensional dynamical systems. Based on the well-posedness results in \cite{H,H1} and the energy law \eqref{BEL} of the system, we define a semigroup map on a suitable phase-space. We prove the existence of a compact absorbing set in the phase-space and the strong continuity of the semigroup, which implies the existence of the global attractor (cf. \cite{T2001}). Then, we obtain the existence of exponential attractors by showing the so-called smoothing property (cf. \cite{MZ08}). It is worth mentioning
that the separation property (that is, $\varphi$ stays away from the pure phases $\pm1$ from a certain time on) makes the Cahn--Hilliard equation with singular potential can be regarded as a parabolic equation with a globally Lipschitz potential. Finally, we prove
some regularity properties of the global attractor and its finite fractal dimension.

 The remaining part of this paper is organized as follows. In Section \ref{pm}, we state the functional settings and introduce the main results. In Section \ref{ws}, we introduce the
 dynamical system related to problem \eqref{f3.c}--\eqref{f2.b} and prove the existence of the global attractor. Section \ref{phs} is devoted to the existence of an exponential
 attractor.

\section{Main Results}\label{pm}
\setcounter{equation}{0}
\subsection{Preliminaries}
Let $\Omega \subset\mathbb{R}^2$ be a bounded domain with smooth boundary $\partial\Omega$. Suppose that $T>0$ is a fixed final time. For the standard Lebesgue and Sobolev spaces, we use the notations $L^{p} := L^{p}(\Omega)$ and $W^{k,p} := W^{k,p}(\Omega)$ for any $p \in [1,+\infty]$ and $k > 0$, equipped with the norms $\|\cdot\|_{L^{p}}$ and $\|\cdot\|_{W^{k,p}}$. We use $H^{k} := W^{k,2}$ and the norm $\|\cdot\|_{H^{k}}$ when $p = 2$. We denote the norm and inner product on $L^{2}(\Omega)$ by $\|\cdot\|$ and $(\cdot,\cdot)$, respectively.
We denote the dual space of a Banach space $X$ by $X'$, and denote the duality pairing between $X$ and $X'$  by
$\langle \cdot,\cdot\rangle_{X',X}$. For any given interval $J$ of $\mathbb{R}^+$, we introduce
the function space $L^p(J;X)$ with $p\in [1,+\infty]$, which
consists of Bochner measurable $p$-integrable
functions by taking values in the Banach space $X$.
We denote the vectorial space $X^d$ by the boldface letter $\bm{X}$, which consists of the product structure.

For every $f\in H^1(\Omega)'$,  $\overline{f}$ denotes its generalized mean value over $\Omega$ satisfying
$\overline{f}=|\Omega|^{-1}\langle f,1\rangle_{(H^1)',\,H^1}$. If $f\in L^1(\Omega)$, then its mean is simply given by $\overline{f}=|\Omega|^{-1}\int_\Omega f \,dx$.
  We recall the Poincar\'{e}--Wirtinger inequality
\begin{equation}
\label{poincare}
\|f-\overline{f}\|\leq C_P\|\nabla f\|,\quad \forall\,
f\in H^1(\Omega),
\end{equation}
where the constant $C_P$ depends only on $\Omega$. Then we
notice that $f\to (\|\nabla f\|^2+|\overline{f}|^2)^\frac12$ is an equivalent norm on $H^1(\Omega)$. We introduce the space $L^2_{0}(\Omega):=\{f\in L^2(\Omega):\overline{f} =0\}$ for $L^2$ functions with zero mean and denote
$H^2_{N}(\Omega):=\{f\in H^2(\Omega):\,\partial_{\bm{n}}f=0 \ \textrm{on}\  \partial \Omega\}$ for $H^2$ functions subject to homogeneous Neumann boundary condition. We recall the Ladyzhenskaya and Agmon's inequalities in two dimensions, which will be used in the subsequent estimates
\begin{align*}
&\|f\|_{L^4}\leq C\|f\|_{H^1}^\frac12\|f\|^\frac12,\qquad \forall\,f\in H^1(\Omega),\\
&\|f\|_{L^\infty}\leq  C\|f\|_{H^2}^\frac12\|f\|^\frac12,\qquad \forall\,f\in H^2(\Omega).
\end{align*}

Then we recall some function spaces related to the Navier--Stokes equations (see e.g., \cite{S}). Suppose that $\bm{C}^\infty_{0,\mathrm{div}}(\Omega)$ is the space of divergence free vector fields in $(C^\infty_0(\Omega))^2$. Define $\bm{L}^2_{0,\mathrm{div}}(\Omega)$ and $\bm{H}^1_{0,\mathrm{div}}(\Omega)$ as the closure of $\bm{C}^\infty_{0,\mathrm{div}}(\Omega)$ with respect to the $\bm{L}^2$ and $\bm{H}^1_0$ norms, respectively. The space $\bm{H}^1_{0,\mathrm{div}}(\Omega)$ is equipped with the scalar product
 $(\bm{u},\bm{v})_{\bm{H}^1_{0,\mathrm{div}}}:=(\nabla \bm{u},\nabla \bm{v})$ for all $\bm{u},\, \bm{v}  \in {\bm{H}^1_{0,\mathrm{div}}(\Omega)}$ and the norm $\|\bm{u}\|_{\bm{H}^1_{0,\mathrm{div}}}=\|\nabla \bm{u}\|$. The classical Korn's inequality
 entails that $\|\nabla \bm{u}\| \leq \sqrt{2} \|D\bm{u}\|\leq \sqrt{2}\|\nabla \bm{u}\|$ for all $\bm{u}\in \bm{H}^1_{0,\mathrm{div}}(\Omega)$.
 It is well known that $\bm{L}^2(\Omega)$ can be decomposed into $\bm{L}^2_{0,\mathrm{div}}(\Omega)\oplus\bm{G}(\Omega)$, where $\bm{G}(\Omega):=\{\bm{f}\in\bm{L}^2(\Omega): \exists\, g\in H^1(\Omega),\ \bm{f}=\nabla g\}$. For any function $\bm{f} \in \bm{L}^2(\Omega)$, the Helmholtz--Weyl decomposition holds:
$\bm{f}=\bm{f}_{0}+\nabla g$ where  $\bm{f}_{0} \in \bm{L}^2_{0,\mathrm{div}}(\Omega)$ and $\nabla g \in \bm{G}(\Omega)$. Consequently, we can define the Helmholtz--Leray projection onto the space of divergence-free functions $\bm{P}:\bm{L}^2(\Omega)\to \bm{L}^2_{0,\mathrm{div}}(\Omega)$ such that $\bm{P}(\bm{f})=\bm{f}_{0}$. We recall the Stokes operator $\bm{S}: \bm{H}^1_{0,\mathrm{div}}(\Omega)\cap\bm{H}^2(\Omega)\to\bm{L}^2_{0,\mathrm{div}}(\Omega)$ satisfying
\be
(\bm{S}\bm{u},\bm{\zeta})=(\nabla \bm{u},\nabla\bm{\zeta}),\quad  \forall\, \bm{\zeta} \in \bm{H}^1_{0,\mathrm{div}}(\Omega),\nonumber
 \ee
 with domain $D(\bm{S})= \bm{H}^1_{0,\mathrm{div}}(\Omega)\cap\bm{H}^2(\Omega)$ (see e.g., \cite[Chapter III]{S}). Besides, there exists a constant $C>0$ such that $\|\bm{u}\|_{\bm{H}^2}\leq C \|\bm{S} \bm{u}\|$ for any $\bm{u}\in \bm{H}^1_{0,\mathrm{div}}(\Omega)\cap\bm{H}^2(\Omega)$. The operator $\bm{S}$ is a canonical isomorphism from $\bm{H}^1_{0,\mathrm{div}}(\Omega)$ to $\bm{H}^1_{0,\mathrm{div}}(\Omega)'$.
 We denote its inverse map by $\bm{S}^{-1}:\bm{H}^1_{0,\mathrm{div}}(\Omega)'\to\bm{H}^1_{0,\mathrm{div}}(\Omega)$. For any $\bm{f}\in \bm{H}^1_{0,\mathrm{div}}(\Omega)'$, there is a unique $\bm{u}=\bm{S}^{-1}\bm{f}\in\bm{H}^1_{0,\mathrm{div}}(\Omega)$ satisfying
\be
(\nabla\bm{S}^{-1}\bm{f},\nabla \bm{\zeta})=\langle\bm{f},\bm{\zeta}\rangle_{(\bm{H}^1_{0,\mathrm{div}})',\,\bm{H}^1_{0,\mathrm{div}}},\quad \forall\, \bm{\zeta} \in \bm{H}^1_{0,\mathrm{div}}(\Omega),\nonumber
\ee
and $\|\nabla\bm{S}^{-1}\bm{f}\|=\langle\bm{f},\bm{S}^{-1}\bm{f} \rangle_{(\bm{H}^1_{0,\mathrm{div}})',\,\bm{H}^1_{0,\mathrm{div}}}^{\frac{1}{2}}$ is an equivalent norm on $\bm{H}^1_{0,\mathrm{div}}(\Omega)'$.
Finally, we recall the following regularity result (see e.g.,  \cite[Chapter III, Theorem 2.2.1]{S} and \cite[Appendix B]{GMT}):
\bl \label{stokes}
 \rm Let $\Omega$ be a bounded smooth domain. For any $\bm{f} \in \bm{L}^2_{0,\mathrm{div}}(\Omega)$,
there exists a unique pair $\bm{u}\in \bm{H}^1_{0,\mathrm{div}}(\Omega)\cap\bm{H}^2(\Omega)$ and $p\in H^1(\Omega)\cap L_0^2(\Omega)$ satisfying $-\Delta \bm{u}+\nabla p=\bm{f}$ a.e. in $\Omega$, that is, $\bm{u}=\bm{S}^{-1}\bm{f}$. Moreover,
\begin{align*}
&\|\bm{u}\|_{\bm{H}^2}+\|\nabla p\|\le C\|\bm{f}\|,\quad
\|p\|\le C \|\bm{f}\|^\frac12\|\nabla \bm{S}^{-1}\bm{f}\|^\frac12,
 \end{align*}
where the positive constant $C$  may depend on $\Omega$ but is independent of $\bm{f}$.
 \el

\subsection{Assumptions and well-posedness}
\noindent
We introduce some basic hypotheses for problem  \eqref{f3.c}--\eqref{ini0} (cf. \cite{H,GMT,GGW}).
\begin{enumerate}
	\item[(H1)]\label{seta} The fluid viscosity $\eta$ satisfies  $\eta \in C^{2}(\mathbb{R})$  and
	$\eta_{*} \leq \eta(r)\leq \eta^*$ for $r \in \mathbb{R}$, where $\eta_{*}<\eta^*$ are positive constants.
	\item[(H2)]\label{item:as} The singular potential $\varPsi$ belongs to the class of functions $C([-1,1])\cap C^{3}(-1,1)$ and has the form
	\begin{equation}
	\varPsi(r)=\varPsi_{0}(r)-\frac{\theta_{0}}{2}r^2,\nonumber
	\end{equation}
	satisfying
	\begin{equation}
	\lim_{r\to \pm 1} \varPsi_{0}'(r)=\pm \infty ,\quad \text{and}\ \  \varPsi_{0}''(r)\ge \theta>0,\quad \forall\, r\in (-1,1),\nonumber
	\end{equation}
	where $\theta_0-\theta>0$. We define $\varPsi_{0}(r)=+\infty$ for any $r\notin[-1,1]$.
	Moreover, there exists $\epsilon_0\in(0,1)$ such that $\varPsi_{0}''$ is nondecreasing in $[1-\epsilon_0,1)$ and nonincreasing in $(-1,-1+\epsilon_0]$.
	\item[(H3)] The convex function $\varPsi_0$ satisfies 
	\be \varPsi_0^{\prime \prime}(z) \leq C \mathrm{e}^{C\left|\varPsi_0^{\prime}(z)\right|}, \quad \forall\, z \in(-1,1). \label{de2}\ee
	 for some positive constant  $C$.
	\item[(H4)] $M: \Omega \times[-1,1] \rightarrow \mathbb{R}$ is given as $M(x, s)=a(x)(m(s)+b(x))$, for some $m \in C^{0,1}([-1,1])$, $a, b \in L^{\infty}(\Omega)$ satisfying
	$-a(x) \geq 0$ for all $x \in \Omega$ and there exists a non-empty measurable subset $U \subset \Omega$, as well as a positive constant $a_0$ such that
	$$
	a(x) \geq a_0 \text { for all } x \in U.
	$$
	There exists a positive constant $\varepsilon_0<1$ such that $\|b\|_{L^{\infty}} \leq 1-\varepsilon_0$.
	
	It holds that
	$$
	m(1)\left\{\begin{array} { l l } 
	{ \geq 1 } & { \text { if } b \neq 0 , } \\
	{ > 0 } & { \text { if } b \equiv 0 , }
	\end{array} \quad m ( - 1 ) \left\{\begin{array}{ll}
	\leq-1 & \text { if } b \neq 0, \\
	<0 & \text { if } b \equiv 0 .
	\end{array}\right.\right.
	$$
	\item[(H5)]\label{sco} The coefficients $A,\ B,\  \chi,\ \alpha,\ c_0$ are  prescribed  constants satisfying
	\be
	A>0,\ \ B>0,\ \  \chi \in \mathbb{R},\ \ \alpha\geq 0,\ \ c_0\in(-1,1). \nonumber
	\ee
	\item[(H6)] The functions $S$ satisfies 
	\be
	 \|S\|_{L^1(0,+\infty, L^1(\Omega))}+ \sup_{t\in [0,+\infty)}\int_t^{t+1}\|S(s)\|^2\,ds\leq S_0,
	\label{ass1}
	\ee
	where $S_0>0$ is an arbitrary but fixed constant.
\end{enumerate}
\begin{remark}
 The logarithmic potential \eqref{pot} satisfies the assumptions $\mathrm{(H2)}$ and $\mathrm{(H3)}$.
\end{remark}
 Then we report the well-posedness results.  When the spatial dimension is two, the existence and uniqueness of global weak solutions have been proven in \cite[Theorems 2.1, 2.2]{H}.:
\begin{theorem}[Weak well-posedness] \label{maind}
Let $\Omega \subset \mathbb{R}^{2}$ and $T \in (0,+\infty)$. Assume that the hypotheses (H1), (H2) and (H4)--(H5) are satisfied. Assume that the initial data satisfy $\bm {v}_{0} \in \bm {L}^2_{0,\mathrm{div}}(\Omega)$, $\varphi_{0}\in H^1(\Omega)$, $\sigma_{0}\in L^2(\Omega)$ with $\|  \varphi_{0} \|_{L^{\infty}} \le 1$ and
$|\overline{\varphi_{0}}|<1$. The system \eqref{f3.c}--\eqref{ini0} exists a weak solution $(\bm{v},\varphi,\mu,\sigma)$ on $[0,T]$, which fulfills the following properties
\begin{align}
&\bm{v} \in L^{\infty}(0,T;\bm{L}^2_{0,\mathrm{div}}(\Omega)) \cap L^{2}(0,T;\bm{H}^1_{0,\mathrm{div}}(\Omega))\cap  H^{1}(0,T;\bm{H}^1_{0,\mathrm{div}}(\Omega)')\notag,\\
&\varphi \in L^{\infty}(0,T;H^1(\Omega))\cap L^{4}(0,T;H^2_{N}(\Omega))\cap L^2(0,T;W^{2,q}(\Omega)) \cap H^{1}(0,T;H^1(\Omega)'),\notag \\
&\mu \in   L^{2}(0,T;H^1(\Omega)),\notag \\
&\sigma  \in L^{\infty}(0,T;L^2(\Omega))\cap L^{2}(0,T;H^1(\Omega)) \cap H^{1}(0,T;H^1(\Omega)'),\notag\\
&\varphi\in L^{\infty}(\Omega\times (0,T))\ \textrm{and}\ \ |\varphi(x,t)|<1\ \ \textrm{a.e.\ in}\ \Omega\times(0,T),\notag
\end{align}
where $q\in [2, +\infty)$, and
\begin{subequations}
	\begin{alignat}{3}
	&\left \langle\partial_t  \bm{ v},\bm{\zeta}\right \rangle_{(\bm{H}^1_{0,\mathrm{div}})',\,\bm{H}^1_{0,\mathrm{div}}}+(\bm{ v} \cdot \nabla  \bm {v},\bm{ \zeta})+(  2\eta(\varphi) D\bm{v},D\bm{ \zeta}) =((\mu+\chi \sigma)\nabla \varphi,\bm {\zeta}),\notag \\
	&\left \langle \partial_t \varphi,\xi\right \rangle_{(H^1)',\,H^1}+(\bm{v} \cdot \nabla \varphi,\xi)=- (\nabla \mu,\nabla \xi)-(M(x,\varphi),\xi),\notag \\
	&\left \langle\partial_t \sigma,\xi\right \rangle_{(H^1)',\,H^1}+(\bm{v} \cdot \nabla \sigma,\xi) + (\nabla \sigma,\nabla \xi) = \chi ( \nabla \varphi,\nabla \xi) + (S,\xi), \notag
	\end{alignat}
\end{subequations}
a.e. in $(0,T)$ for all $\bm {\zeta} \in \bm{H}^1_{0,\mathrm{div}}$ and $\xi \in H^1(\Omega)$, where $ \mu=A\varPsi'(\varphi)-B\Delta \varphi-\chi \sigma$ a.e. in $\Omega\times (0,T)$. Moreover, the initial conditions in \eqref{ini0} are satisfied.

 Consider two groups of initial data satisfying  $(\bm{v}_{0i},\varphi_{0i},\sigma_{0i})\in\bm{L}^2_{0,\mathrm{div}}(\Omega)\times H^1(\Omega)\times L^2(\Omega)$ with $\left \|  \varphi_{0i}\right \|_{L^{\infty}} \le 1$ and  $|\overline{\varphi}_{0i}|<1$, $i=1,\, 2$.
The global weak solutions $(\bm{v}_{1},\varphi_{1},\sigma_{1})$,   $(\bm{v}_{2},\varphi_{2 },\sigma_{2})$ to problem \eqref{f3.c}--\eqref{ini0} on $[0,T]$ with initial data $(\bm{v}_{0i},\varphi_{0i},\sigma_{0i})$, $i=1,\, 2$,
satisfy the following continuous dependence estimate:
\be
W(t)\le C\left(\frac{W(0)}{C}\right )^{\exp\big(-C\int_{0}^{t}Z(s)\, ds\big)},\quad \forall\, t\in [0,T],\label{uni}
\ee
where
\begin{align*}
W(t)&= \frac{1}{2}\|\nabla\bm{S}^{-1}[\bm{v}_1(t)-\bm{v}_2(t)]\|^2+\frac{1}{2}\|\varphi_1(t)-\varphi_2(t) \|_{(H^1)'}^2  +\frac{1}{2}\|\sigma_1(t)-\sigma_2(t) \|_{(H^1)'}^2\\
&\quad +|\overline{\varphi}_1(t)-\overline{\varphi}_2(t)|,\\
Z(t)&= \|\nabla \bm{v}_{1}(t)\|^2 +\|\nabla \bm{v}_{2}(t)\|^2 +\|\varphi_{1}(t)\|_{W^{2,3}}^2 +\|\varphi_{2}(t)\|_{W^{2,3}}^2 +\|\varphi_{1}(t)\|_{H^{2}}^4\notag\\
&\quad +\|\varPsi'(\varphi_{1})\|_{L^1}+\|\varPsi'(\varphi_{2})\|_{L^1}+\|\sigma_2(t)\|_{H^1}^2+1,
\end{align*}
and  {\color{black}{the constant $C$ may depend on the norm of the initial data}}, $\Omega$, $T$ and coefficients of the system. In particular, the global weak solution $(\bm{v},\varphi,\mu,\sigma)$ to problem \eqref{f3.c}--\eqref{ini0} is unique.
\end{theorem}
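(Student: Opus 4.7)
The plan is to obtain existence by a regularize-and-pass-to-the-limit scheme and then establish uniqueness through a weak-norm continuous dependence estimate involving a logarithmic Gronwall argument. The overall framework is classical for Cahn--Hilliard-type systems with singular potential coupled to Navier--Stokes; the main difficulties are (i) handling the singular term $\varPsi_0'(\varphi)$ while proving strict separation $|\varphi|<1$ a.e., and (ii) deriving the precise form of the continuous dependence estimate \eqref{uni}, which dictates both the test functions and the Gronwall lemma.

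For existence, I would first replace $\varPsi_0$ by a family of globally defined convex $C^2$ approximations $\varPsi_0^{\epsilon}$ (for instance, a quadratic extension outside $[-1+\epsilon,1-\epsilon]$ preserving $\varPsi_0^{\epsilon\,\prime\prime}\geq \theta$), so the approximate problem has a regular potential. On this regularized problem I would set up a Faedo--Galerkin scheme (finite-dimensional in $\bm v$ and in $\varphi,\sigma$ using eigenfunctions of $\bm S$ and the Neumann Laplacian), solve the resulting ODE system, and then derive uniform estimates from the energy identity \eqref{BEL}. The Oono right-hand side $-\alpha(\varphi-c_0)\mu$ is controlled by testing \eqref{f1.a} against $1$ to deduce the mass ODE \eqref{mph1}--\eqref{mph2}, then by splitting $\mu=\overline\mu+(\mu-\overline\mu)$ with $\|\mu-\overline\mu\|\le C_P\|\nabla\mu\|$ absorbed by the dissipation, while $\overline\mu$ is controlled using the inequality $\|\varPsi_0'(\varphi)\|_{L^1}\le C(1+(\varPsi_0'(\varphi),\varphi-\overline\varphi))$ which comes from the strict convexity assumption in (H2). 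This yields uniform bounds on $\bm v$, $\varphi$, $\sigma$, $\mu$, $\varPsi_0'(\varphi^\epsilon)$ in the natural spaces listed in the theorem; elliptic regularity applied to $-B\Delta\varphi+A\varPsi_0'(\varphi)=\mu+\chi\sigma+A\theta_0\varphi$ with Neumann data gives the $L^2(0,T;W^{2,q})$ control via (H3).

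Then I would pass to the limit $\epsilon\to 0$ using Aubin--Lions type compactness (uniform bounds on time-derivatives in dual spaces give strong convergence of $\bm v,\varphi,\sigma$ in $L^2(0,T;L^2)$), together with Minty-type monotonicity arguments for the singular nonlinearity $\varPsi_0'$. The strict separation $|\varphi(x,t)|<1$ a.e.\ follows from $\varPsi_0'(\varphi)\in L^2(0,T;L^2)$ (or $L^\infty(0,T;L^1)$), which forces $\varphi$ to avoid $\pm 1$ on positive-measure sets since $\varPsi_0'(\pm 1)=\pm\infty$. Identification of the limits in $\bm v\cdot\nabla\varphi$, $(\mu+\chi\sigma)\nabla\varphi$ and $\eta(\varphi)D\bm v$ is standard given the strong convergences above.

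For the continuous dependence, I would set $\bm w=\bm v_1-\bm v_2$, $\phi=\varphi_1-\varphi_2$, $\tau=\sigma_1-\sigma_2$, and test the $\bm w$-equation with $\bm S^{-1}\bm P\bm w$ (after treating the average of $\bm w$, which vanishes), the $\phi$-equation with $\mathcal N^{-1}(\phi-\overline\phi)$ (where $\mathcal N$ is the inverse Neumann Laplacian on $L^2_0$), and the $\tau$-equation similarly; the mean $\overline\phi(t)$ is controlled directly from \eqref{mph1}, which explains the extra $|\overline\varphi_1-\overline\varphi_2|$ term in $W$. The bad term is
\[
\bigl(A\varPsi'(\varphi_1)-A\varPsi'(\varphi_2)-\chi\tau,\phi-\overline\phi\bigr),
\]
whose convex part $(\varPsi_0'(\varphi_1)-\varPsi_0'(\varphi_2),\phi)\ge 0$ helps, while the transport/coupling terms are estimated in Ladyzhenskaya/Agmon fashion with the regularities appearing in $Z$. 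The step I expect to be the hardest is producing a differential inequality of the form $W'(t)\le CZ(t)\,W(t)\log\!\bigl(C/W(t)\bigr)$: it requires bounding a term like $(\bm w\cdot\nabla\varphi_2,\mathcal N^{-1}(\phi-\overline\phi))$ by $C\|\nabla\bm S^{-1}\bm w\|\,\|\phi-\overline\phi\|_{(H^1)'}^{1/2}$ times a factor involving $\log$ of the inverse weak norm, using a Brezis--Gallouet/Trudinger--Moser-type refinement together with (H3) to convert $L^\infty$-control of $\varphi_i$ into a logarithmic loss. Once this inequality is obtained, integrating after the substitution $u=\log(C/W)$ yields $u(t)\ge u(0)\exp(-C\int_0^t Z)$, which is exactly \eqref{uni}; uniqueness is an immediate corollary.
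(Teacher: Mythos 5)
Your overall architecture coincides with that of the actual proof, which this paper does not reproduce but quotes from \cite[Theorems 2.1, 2.2]{H} (itself following the strategy of \cite{GMT}): regularize the singular potential, run a Faedo--Galerkin scheme, derive uniform bounds from the energy law \eqref{BEL} combined with the mass dynamics \eqref{mph1}--\eqref{mph2} and the $\|\varPsi_0'(\varphi)\|_{L^1}$-bound controlling $\overline{\mu}$, pass to the limit by Aubin--Lions compactness, obtain $|\varphi|<1$ a.e.\ from the integrability of $\varPsi_0'(\varphi)$, and prove continuous dependence in the dual norms built from $\bm{S}^{-1}$ and $\mathcal{N}^{-1}$, closed by an Osgood/logarithmic Gronwall argument whose integrated form is exactly \eqref{uni}. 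However, two concrete points in your uniqueness part would not survive execution as written.

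First, you invoke (H3) twice, but Theorem \ref{maind} assumes only (H1), (H2), (H4), and (H3) is in fact not needed at this level: the $L^2(0,T;W^{2,q})$ bound follows from the elliptic estimate $\|\varphi\|_{W^{2,q}}+\|\varPsi_0'(\varphi)\|_{L^q}\le C(\|\mu\|_{H^1}+\|\varphi\|_{H^1}+\|\sigma\|_{H^1})$, which uses only the convexity structure in (H2); the exponential growth condition \eqref{de2} enters only through the $\|\varPsi_0''(\varphi)\|_{L^p}$ bound used for the strong theory and the attractor regularity. Second, and more seriously, your sketch misplaces the genuinely critical terms. In the momentum equation, the force difference contains contributions of the type $A\bigl(\varPsi'(\varphi_1)-\varPsi'(\varphi_2)\bigr)\nabla\varphi_2$; at weak regularity there is no separation property and no control on $\varPsi''(\varphi_i)$, so any mean-value/Lipschitz estimate fails. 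One must first exploit the gradient structure $\varPsi'(\varphi_i)\nabla\varphi_i=\nabla\varPsi(\varphi_i)$ and $-B\Delta\varphi_i\nabla\varphi_i=\nabla\bigl(\tfrac{B}{2}|\nabla\varphi_i|^2\bigr)-B\,\mathrm{div}(\nabla\varphi_i\otimes\nabla\varphi_i)$, so that all singular contributions become pure gradients annihilated by the divergence-free test function $\bm{S}^{-1}(\bm{v}_1-\bm{v}_2)$, leaving only $\mathrm{div}\bigl(\nabla\varphi_1\otimes\nabla\phi+\nabla\phi\otimes\nabla\varphi_2\bigr)$ with $\phi=\varphi_1-\varphi_2$ (this cancellation is exactly the one visible in the paper's Lemma \ref{continu} computations at the strong level); the residual mean-value terms are what generate $\|\varPsi'(\varphi_i)\|_{L^1}$ in $Z$ and the first-power term $|\overline{\varphi}_1-\overline{\varphi}_2|$ in $W$. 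Moreover, the term you identify as the source of the logarithm, $(\bm{w}\cdot\nabla\varphi_2,\mathcal{N}^{-1}(\phi-\overline{\phi}))$, is subcritical: after integrating by parts (using $\mathrm{div}\,\bm{w}=0$) it is handled by Ladyzhenskaya-type interpolation with no logarithmic loss. The logarithm is instead forced by the unmatched-viscosity term $2\bigl((\eta(\varphi_1)-\eta(\varphi_2))D\bm{v}_2, D\bm{S}^{-1}\bm{w}\bigr)$, which requires $\|\phi\|_{L^\infty}$ while only $\|D\bm{v}_2\|\in L^2(0,T)$ is available: polynomial interpolation here yields a sublinear power $W^{\theta}$ with $\theta<1$, which does \emph{not} give uniqueness, whereas the two-dimensional Brezis--Gallouet-type inequality bounding $\|\phi\|_{L^\infty}$ through $\|\phi\|_{H^1}$ with a logarithmic correction relative to $\|\phi\|_{W^{2,3}}$ yields the Osgood-admissible right-hand side $CZ\,W\log(C/W)$ --- this is precisely why $\|\varphi_i\|_{W^{2,3}}^2$ appears in $Z$, and it involves no Trudinger--Moser or (H3) input. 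With these two corrections your plan matches the cited proof; your final integration step, via $u=\log(C/W)$ and $u(t)\ge u(0)\exp\bigl(-C\int_0^t Z\,ds\bigr)$, is the right one.
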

 The following global strong well-posedness result can be found in \cite{H1}.
\begin{theorem}[Strong well-posedness] \label{2main}
	Let $\Omega \subset \mathbb{R}^{2}$ be a bounded smooth domain and $T>0$. Suppose that the hypotheses (H1)--(H4) are satisfied. For any $ \bm{v}_{0} \in \bm{H}^1_{\mathrm{0,div}}(\Omega)$, $\varphi_{0} \in H^{2}_N(\Omega)$, $\sigma_{0}\in H^1(\Omega)$ with $\left\|\varphi_{0}\right\|_{L^{\infty}} \leq 1,\ \left|\overline{\varphi_{0}}\right|<1$ and
	$\mu_{0}=A\varPsi^{\prime}\left(\varphi_{0}\right) -B\Delta \varphi_{0} -\chi\sigma_0\in H^1(\Omega)$,  there exists a unique global strong
	solution $(\bm{v}, p, \varphi, \mu, \sigma)$ to problem \eqref{f3.c}--\eqref{ini0} on $[0, T]$ such that
	\begin{align*}
	&\bm{v} \in C\left([0, T] ; \bm{H}^1_{\mathrm{0,div}}(\Omega)\right) \cap L^{2}\left(0, T ; \bm{H}^1_{\mathrm{0,div}}(\Omega)\cap\bm{H}^2(\Omega)\right) \cap H^{1}\left(0, T ;\bm{L}^2_{0,\mathrm{div}}(\Omega)\right), \\
	&p\in L^2(0,T;H^1(\Omega)),\\
	&\varphi \in C_w\left([0, T] ; H^3(\Omega)\right) \cap L^2(0,T;H^4(\Omega))\cap H^{1}(0, T ; H^1(\Omega)),\\
	& \mu\in C([0, T] ; H^1(\Omega)) \cap L^{2}\left(0, T ; H^{3}(\Omega)\right) \cap H^{1}\left(0, T ; H^1(\Omega)^{\prime}\right),\\
	&\sigma   \in C([0,T] ;H^1(\Omega))\cap L^{2}(0,T;H^2(\Omega)) \cap H^1(0,T;L^2(\Omega)),\\
	&\varphi\in L^{\infty}(\Omega\times (0,T))\ \textrm{and}\ \ |\varphi(x,t)|<1\ \ \textrm{a.e.\ in}\ \Omega\times(0,T)\notag
	\end{align*}
	The strong solution satisfies the equations \eqref{f3.c}--\eqref{f2.b} a.e. in $\Omega \times(0, T)$, the boundary conditions \eqref{boundary} a.e. on $\partial\Omega\times(0,T)$ as well as the initial conditions \eqref{ini0} a.e. in $\Omega$. Consider two sets of initial data satisfying  $\bm{v}_{0i}\in\bm{H}^1_{0,\mathrm{div}}(\Omega)$, $\varphi_{0i}\in H^2_N(\Omega)$, $\sigma_{0i}\in H^1(\Omega)$ with $\left \|  \varphi_{0i}\right \|_{L^{\infty}} \le 1$, $|\overline{\varphi_{0i}}|<1$, and 	$\mu_{0i}=A\varPsi^{\prime}\left(\varphi_{0i}\right) -B\Delta \varphi_{0i} -\chi\sigma_{0i}\in H^1(\Omega)$.
	Denote the corresponding global strong solutions to problem \eqref{f3.c}--\eqref{ini0} by $(\bm{v}_{i},\varphi_{i},\mu_i,\sigma_{i})$, $i=1,\, 2$. Then we have the following estimate:
	\begin{align}
	&\|\boldsymbol{v}_1(t)-\boldsymbol{v}_2(t)\|^{2}
	+\|\varphi_1(t)-\varphi_2(t)\|^{2}_{H^1}
	+\|\sigma_1(t)-\sigma_2(t)\|^{2}\nonumber \\
	&\quad \le C_T\Big(\|\boldsymbol{v}_{01}-\boldsymbol{v}_{02}\|^{2}
	+\|\varphi_{01}-\varphi_{02}\|^{2}_{H^1}
	+\|\sigma_{01}-\sigma_{02}\|^{2} \Big)\label{conti1}
	\end{align}
	for all $t\in [0,T]$, where $C_T$ is a positive constant depending on norms of the initial data, coefficients of the system, $\Omega$ and $T$.
\end{theorem}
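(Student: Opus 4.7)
The plan is to establish Theorem 2.2 in three stages: construct a strong solution via a semi-Galerkin approximation, globalize it through a priori estimates exploiting the two-dimensional setting and the strict separation property for $\varphi$, and finally derive the continuous dependence estimate \eqref{conti1} at the natural $H^1$ phase-space level. Since Theorem 2.1 already yields a unique global weak solution, the central task is to propagate the enhanced regularity of the data. Concretely, on a Galerkin scheme built on the eigenfunctions of the Stokes operator and the Neumann Laplacian, I would close higher-order estimates by testing the Navier--Stokes equation with $\bm{S}\bm{v}$ (and with $\partial_t \bm{v}$), the Cahn--Hilliard system with $\partial_t \mu$, and the nutrient equation with $-\Delta\sigma$ (and with $\partial_t \sigma$). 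Elliptic regularity for the Neumann problem $-B\Delta\varphi + A\varPsi_0'(\varphi) = \mu + \chi\sigma + A\theta_0\varphi$ would then upgrade the spatial regularity of $\varphi$ up to $H^3$, matching the claimed class.

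The main obstacle is that $\varPsi''(\varphi)$ blows up as $\varphi\to\pm 1$, preventing any direct higher-order estimate. This is where hypothesis (H3), namely \eqref{de2}, becomes indispensable: in 2D it enables the \emph{strict separation property}, the existence of $\delta=\delta(\text{data})>0$ such that $\|\varphi(t)\|_{L^\infty}\leq 1-\delta$ for all $t>0$. I would derive it by a Moser/De Giorgi iteration on $\varPsi_0'(\varphi)$, adapted from the classical Cahn--Hilliard analysis with logarithmic potential to accommodate the advection by $\bm{v}$ and the nutrient coupling through $\chi\sigma$, using the $L^2(0,T;H^1)$ bound on $\mu$ supplied by the energy law \eqref{BEL}. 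Once separation is secured, $\varPsi$ effectively behaves like a smooth globally Lipschitz nonlinearity, and all remaining nonlinear couplings are controlled by the 2D Ladyzhenskaya and Agmon inequalities together with the Stokes regularity of Lemma~\ref{stokes}, closing the higher-order estimates uniformly in time and allowing passage to the limit in the approximation.

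For the continuous dependence estimate \eqref{conti1}, I would introduce the differences $(\bm{u},\phi,\rho,\nu)=(\bm{v}_1-\bm{v}_2,\varphi_1-\varphi_2,\sigma_1-\sigma_2,\mu_1-\mu_2)$, which satisfy the subtracted system with $\nu=A[\varPsi'(\varphi_1)-\varPsi'(\varphi_2)]-B\Delta\phi-\chi\rho$ plus the standard transport and chemotactic couplings. The mean $\overline{\phi}$ obeys the linear ODE \eqref{mph1}, so only the zero-mean part requires attention. Testing the momentum equation with $\bm{u}$, the phase-field equation with $\nu$ (and separately with $\phi$ to recover $\|\phi\|_{H^1}$), and the nutrient equation with $\rho$, the strict separation property gives uniform Lipschitz control of $\varPsi'(\varphi_1)-\varPsi'(\varphi_2)$; the chemotactic source $(\mu_i+\chi\sigma_i)\nabla\varphi_i$ is split by polarization and estimated via Ladyzhenskaya together with the strong regularity of each $(\bm{v}_i,\varphi_i,\sigma_i)$. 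One arrives at a differential inequality of the form
\[
\frac{d}{dt}\bigl(\|\bm{u}\|^2+\|\phi\|_{H^1}^2+\|\rho\|^2\bigr)\leq Z(t)\bigl(\|\bm{u}\|^2+\|\phi\|_{H^1}^2+\|\rho\|^2\bigr),
\]
with $Z\in L^1(0,T)$ depending on the strong-solution norms, and Gronwall's lemma then delivers \eqref{conti1}.
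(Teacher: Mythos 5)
The paper itself offers no proof of Theorem \ref{2main}: it is imported verbatim from \cite{H1} (``The following global strong well-posedness result can be found in \cite{H1}''), so the comparison is with that cited proof, whose main ingredients are visible in Sections 3--4 of the present paper. Your outline matches its skeleton quite closely: the higher-order differential inequality is indeed driven by the functional $\Lambda_1=\frac12\|\nabla\bm{v}\|^2+\frac12\|\nabla\mu\|^2+(\bm{v}\cdot\nabla\varphi,\mu)+\frac12\|\nabla\sigma\|^2$ (see Step 2 of Section 3, quoting \cite[Section 3.3]{H1}), which is exactly what testing the momentum equation with $\partial_t\bm{v}$/$\bm{S}\bm{v}$, the Cahn--Hilliard equation with $\partial_t\mu$, and the nutrient equation with $-\Delta\sigma$ produces; the $H^3$/$H^4$ regularity of $\varphi$ then follows from elliptic theory for the Neumann problem, as you say. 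Your continuous-dependence argument is also essentially the one in \cite[Section 5]{H1} (quoted here as \eqref{bd1}): test with the differences, use the strict separation to make $\varPsi'$ effectively Lipschitz, and apply Gronwall with $Z\in L^1(0,T)$; this is legitimate from $t=0$ because for strong solutions $\mu\in C([0,T];H^1)$, so separation holds uniformly on all of $[0,T]$, not merely after a transient.

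There are, however, two concrete weak points. First, the construction: a Galerkin discretization of $\varphi$ in eigenfunctions of the Neumann Laplacian is incompatible with the singular potential, since finite-dimensional approximants do not satisfy $|\varphi_n|\le 1$ and $\varPsi_0'$ is only defined on $(-1,1)$, so the nonlinearity is undefined along your scheme. The actual route (in \cite{H} and \cite{H1}) first replaces $\varPsi$ by globally defined regular approximations $\varPsi_\varepsilon$ with controlled growth, derives uniform estimates, and passes to the limit --- equivalently, one bootstraps the extra regularity directly from the weak solution of Theorem \ref{maind}; a true semi-Galerkin scheme (discretizing only $\bm{v}$ and solving the $(\varphi,\sigma)$ subsystem exactly) would also preserve $|\varphi|\le 1$, but that is not what you describe. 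Second, the separation mechanism: hypothesis (H3), i.e.\ \eqref{de2}, is engineered for the two-dimensional Trudinger--Moser argument --- elliptic $L^p$ bounds on $\varPsi_0'(\varphi)$ in terms of $\|\mu\|_{H^1}$, then $\|\varPsi_0''(\varphi)\|_{L^p}\le C\big(1+e^{C\|\mu+\theta_0\varphi+\chi\sigma\|_{H^1}^2}\big)$, then $\varPsi_0'(\varphi)\in W^{1,p}\hookrightarrow L^\infty$, which forces $\|\varphi\|_{C(\bar\Omega)}\le 1-\delta$ --- precisely the chain displayed in Step 5 of Section 3 and in \cite[Lemma 3.2]{H1}. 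Your proposed ``Moser/De Giorgi iteration on $\varPsi_0'(\varphi)$'' is a different device that is nonstandard for the local fourth-order equation, and you give no indication of how the iteration would close; as written this step is unsubstantiated, even though the lemma you are targeting is true and citable. With these two repairs (regularize the potential or bootstrap from the weak solution; prove separation via the (H3)--Trudinger--Moser chain), your plan becomes the proof of \cite{H1}.
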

\subsection{Statement of main results}
For any given constants
$$m_1\in [0,1),\quad  c_0\in [-m_1,m_1],\quad \text{and}\ m_2\geq 0,$$
we define the phase space
\be
\mathcal{X}_{m_1,m_2}=\big\{(\bm{v},\varphi,\sigma)\in \bm{L}^2(\Omega)\times H^1(\Omega)\times L^2(\Omega)\,:\, \|\varphi\|_{L^\infty}\leq 1,\ |\overline{\varphi}|\leq m_1,\ |\overline{\sigma}|\leq m_2\big\} \label{phase}
\ee
 endowed with the metric
$
\mathrm{d}((\bm{v}_1,\varphi_1,\sigma_1),(\bm{v}_2,\varphi_2,\sigma_2))=
\|\bm{v}_1-\bm{v}_2\|+\|\varphi_1-\varphi_2\|_{H^1}+\|\sigma_1-\sigma_2\|
$.
We can check easily that $\mathcal{X}_{m_1,m_2}$ is a complete metric space.

We first prove the existence of a global attractor in $\mathcal{X}_{m_1,m_2}$:
\begin{theorem}[Global attractor]\label{attr}
	Assume that the hypotheses (H1)--(H6) are satisfied. Problem \eqref{f1.a}--\eqref{ini0} generates a strongly continuous
	semigroup $\mathcal{S}(t): \mathcal{X}_{m_1,m_2}\to \mathcal{X}_{m_1,m_2}$ such that for all $t\geq 0$ it holds
	$$\mathcal{S}(t)(\bm{v}_0,\varphi_0,\sigma_0) =(\bm{v}(t),\varphi(t),\sigma(t)) ,$$ where $(\bm{v},\varphi, \sigma)$ is the unique global weak solution proved in Theorem \ref{maind} corresponding to the initial datum $(\bm{v}_0,\varphi_0, \sigma_0)\in \mathcal{X}_{m_1,m_2}$. Moreover, the dynamical system $(\mathcal{S}(t), \mathcal{X}_{m_1,m_2})$ has the connected global attractor $\mathcal{A}_{m_1,m_2}\subset \mathcal{X}_{m_1,m_2}$ such that\\
	1. $\mathcal{A}_{m_1,m_2}$ is a compact set in $\mathcal{X}_{m_1,m_2}$;\\
	2. $\mathcal{A}_{m_1,m_2}$ is invariant in $\mathcal{X}_{m_1,m_2}$, i.e. $S(t) \mathcal{A}_{m_1,m_2}=\mathcal{A}_{m_1,m_2}$, for every $t \geq 0$;\\
	3. $\mathcal{A}_{m_1,m_2}$ is an attracting set for $S(t)$ on $\mathcal{X}_{m_1,m_2}$, i.e. for every bounded set $\mathcal{B} \subset \mathcal{X}_{m_1,m_2}$,
		$$
	\lim_{t\to 0}\mathrm{dist}_{\bm{L}^2\times H^1\times L^2}(\mathcal{S}(t)\mathcal{B},\mathcal{A}_{m_1,m_2})=0,\quad \forall\,t\geq 0,
	$$
	where $\mathrm{dist}_{\bm{L}^2\times H^1\times L^2}(\cdot,\cdot,\cdot)$ denotes the Hausdorff semidistance between two sets with respect to the metric of $\bm{L}^2(\Omega)\times H^1(\Omega)\times L^2(\Omega)$.
	
	 Furthermore, we mention that $\mathcal{A}_{m_1,m_2}$ is bounded in $\bm{H}^1(\Omega)\times H^3(\Omega)\times H^1(\Omega)$.
\end{theorem}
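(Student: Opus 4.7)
The plan is to follow the classical framework for dissipative infinite-dimensional dynamical systems (see \cite{T2001}): define the semigroup, verify its invariance and strong continuity on $\mathcal{X}_{m_1,m_2}$, construct a bounded absorbing set via the dissipative energy identity, and upgrade it to a compact one by higher-order regularity bootstraps exploiting the separation property of the singular potential.

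First, I would use the weak well-posedness result to set $\mathcal{S}(t)(\bm{v}_0,\varphi_0,\sigma_0):=(\bm{v}(t),\varphi(t),\sigma(t))$; the semigroup property follows from uniqueness in Theorem \ref{maind}. Invariance of $\mathcal{X}_{m_1,m_2}$ is checked as follows. The bound $\|\varphi(t)\|_{L^\infty}\leq 1$ is part of the weak-solution regularity. The identity \eqref{mph2} together with $c_0,\overline{\varphi_0}\in[-m_1,m_1]$ shows that $\overline{\varphi}(t)$ is a convex combination of $c_0$ and $\overline{\varphi_0}$, hence remains in $[-m_1,m_1]$. Integrating \eqref{f2.b} over $\Omega$, the convective term vanishes thanks to $\mathrm{div}\,\bm{v}=0$ and $\bm{v}|_{\partial\Omega}=\bm{0}$, while the diffusion term vanishes by the Neumann boundary conditions on $\sigma$ and $\varphi$, so $\overline{\sigma}(t)=\overline{\sigma_0}$ and $|\overline{\sigma}(t)|\leq m_2$. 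For strong continuity in the phase-space topology, I would combine \eqref{uni} (which controls the difference in weaker norms) with the uniform bounds of Theorem \ref{maind} and perform an energy identity for the difference $(\bm{v}_1-\bm{v}_2,\varphi_1-\varphi_2,\sigma_1-\sigma_2)$ to upgrade the convergence from the weak norms of \eqref{uni} to the strong topology of $\bm{L}^2\times H^1\times L^2$.

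Next, I would derive a bounded absorbing set from the energy identity \eqref{BEL}. Denoting its left-hand side by $\mathcal{E}(t)$, the dissipation controls $\|\nabla\bm{v}\|^2+\|\nabla\mu\|^2+\|\nabla(\sigma+\chi(1-\varphi))\|^2$. The Oono source $-\alpha\int_\Omega(\varphi-c_0)\mu\,dx$ is handled by splitting $\mu=\overline{\mu}+(\mu-\overline{\mu})$: the oscillatory part is absorbed by $\|\nabla\mu\|^2$ via the Poincar\'e--Wirtinger inequality, while the mean is estimated via $\overline{\mu}=|\Omega|^{-1}\int_\Omega(A\varPsi'(\varphi)-\chi\sigma)\,dx$ together with the standard $L^1$-bound on $\varPsi'(\varphi)$ available when $|\overline{\varphi}|\leq m_1<1$ (a well-known consequence of the convexity of $\varPsi_0$ and hypothesis (H2)). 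A Gronwall-type argument combined with the exponential decay \eqref{mph2} of $\overline{\varphi}(t)-c_0$ then yields a uniform-in-time estimate $\mathcal{E}(t)\leq R_0$ for $t\geq t_0$, with $t_0,R_0$ depending only on a bound for $\mathcal{E}(0)$; this produces a bounded absorbing set $\mathcal{B}_0\subset\mathcal{X}_{m_1,m_2}$.

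The main obstacle is upgrading $\mathcal{B}_0$ to a compact absorbing set, for which the crucial ingredient is the instantaneous separation property: there exist $\delta>0$ and $t_1>0$ such that $\|\varphi(t)\|_{L^\infty}\leq 1-\delta$ for all $t\geq t_1$ and all data in $\mathcal{B}_0$. This follows under hypothesis (H3) from the De Giorgi-type iteration developed in the companion works used to prove Theorem \ref{2main}. Once $\varPsi'(\varphi)$ is effectively a globally Lipschitz nonlinearity, I would perform successive higher-order estimates along the lines of Theorem \ref{2main}: differentiating the momentum equation in time and testing against $\partial_t\bm{v}$ yields uniform $\bm{H}^1$-bounds on $\bm{v}$; elliptic regularity applied to $-B\Delta\varphi+A\varPsi'(\varphi)=\mu+\chi\sigma$, together with $H^1$-bounds on $\mu$ and $\sigma$, gives $H^3$-bounds on $\varphi$; and standard parabolic estimates for \eqref{f2.b} yield $H^1$-bounds on $\sigma$. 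This provides a compact absorbing set $\mathcal{B}_1$ bounded in $\bm{H}^1\times H^3\times H^1$. The abstract theorem on global attractors (cf. \cite{T2001}) then produces $\mathcal{A}_{m_1,m_2}=\omega(\mathcal{B}_1)$ satisfying items 1--3; connectedness follows from the connectedness of $\mathcal{X}_{m_1,m_2}$ and the continuity of $\mathcal{S}(t)$. The final regularity assertion is immediate from the inclusion $\mathcal{A}_{m_1,m_2}\subset\mathcal{B}_1$.
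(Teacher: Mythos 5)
Your overall architecture (semigroup via uniqueness, bounded absorbing set from the energy law, compact absorbing set from higher regularity, abstract attractor theorem from \cite{T2001,T}) matches the paper, and your explicit checks of the invariance of $\mathcal{X}_{m_1,m_2}$ ($\overline{\varphi}(t)$ a convex combination of $c_0$ and $\overline{\varphi_0}$ via \eqref{mph2}, conservation of $\overline{\sigma}$) are correct and worth keeping. However, the central compactness step hangs on an unproven claim: you make an \emph{instantaneous} separation property on the absorbing set $\mathcal{B}_0$ the prerequisite for all higher-order estimates, asserting it ``follows under (H3) from the De Giorgi-type iteration developed in the companion works.'' That is not what those works provide, and the logical order in the paper is the reverse of yours. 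The paper first proves the uniform higher-order bound \eqref{ener} (in particular $\|\mu(t)\|_{H^1}\le C_9$ for $t\ge t_0(\mathcal{B})+1$) \emph{without any separation}, by testing the momentum equation with $\bm{S}\bm{v}$, forming the functional $\Lambda_1(t)$ and applying the uniform Gronwall lemma to \eqref{2menergy}; only then does it deduce, via the elliptic estimates of Step~5 and hypothesis (H3) (the growth condition $\varPsi_0''(z)\le Ce^{C|\varPsi_0'(z)|}$ is exactly tailored to this two-dimensional embedding argument, giving $\varPsi_0'(\varphi)\in W^{1,3}(\Omega)\hookrightarrow L^\infty(\Omega)$ and hence \eqref{sepa} by \cite{H1}), the separation property --- which is then used only to upgrade the attractor's regularity to $H^3$ in the $\varphi$-component. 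With only energy-level bounds available on $\mathcal{B}_0$, the separation you postulate at time $t_1$ is a substantial theorem you would have to prove from scratch; as written, your bootstrap is circular relative to the tools actually cited (and unnecessary: neither Theorem \ref{2main} nor the estimate \eqref{2menergy} requires separation).

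A second, smaller gap concerns strong continuity of $\mathcal{S}(t)$. You propose to upgrade \eqref{uni} to the $\bm{L}^2\times H^1\times L^2$ topology by ``an energy identity for the difference,'' but a strong-norm energy estimate for differences of \emph{weak} solutions is precisely what the singular potential forecloses --- differences of $\varPsi'(\varphi_1)-\varPsi'(\varphi_2)$ are not controllable without separation, and data in $\mathcal{X}_{m_1,m_2}$ are not strong; this is why the continuous dependence in Theorem \ref{maind} is stated in the dual norms $\bm{H}^1_{0,\mathrm{div}}(\Omega)'\times (H^1)'\times (H^1)'$ in the first place. The paper instead exploits instantaneous regularization: it selects an intermediate time $\tau\in(t/2,3t/4)$ where the integrated dissipation bounds give $\bm{H}^1\times H^2\times H^1$ control, restarts the flow from $\tau$ to bound both trajectories in the higher norms at time $t$, and then interpolates (e.g. $\|\varphi-\varphi_n\|_{H^1}\le C\|\varphi-\varphi_n\|_{(H^1)'}^{1/4}\|\varphi-\varphi_n\|_{H^2}^{3/4}$) to convert the dual-norm estimate \eqref{uni4} into the strong convergence \eqref{conti}. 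You should replace your difference-energy step by this interpolation argument; as stated it would fail.
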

{\color{black}{
		Next, we prove that the dynamical system $(\mathcal{S}(t), \mathcal{X}_{m_1,m_2})$ possesses an exponential attractor $\mathcal{M}_{m_1,m_2}$.
		\begin{theorem}[Exponential attractor]\label{eattr}
			Assume that the hypotheses (H1)--(H6) are satisfied. Moreover, we assume that  $\varPsi$ belongs to the class of functions $ C^{4}(-1,1)$. The dynamical system  $(\mathcal{S}(t), \mathcal{X}_{m_1,m_2})$ has an exponential attractor $\mathcal{M}_{m_1,m_2}\subset \mathcal{X}_{m_1,m_2}$, which is bounded in $\bm{H}^1(\Omega)\times H^3(\Omega)\times H^1(\Omega)$ and satisfies the following properties:
			\begin{itemize}
				\item[$\mathrm{(1)}$] $\mathcal{M}_{m_1,m_2}$ is positively invariant in $\mathcal{X}_{m_1,m_2}$ satisfying $\mathcal{S}(t)\mathcal{M}_{m_1,m_2}\subset \mathcal{M}_{m_1,m_2}$ for all $t\geq 0$;
				\item[$\mathrm{(2)}$] $\mathcal{M}_{m_1,m_2}$ has finite fractal dimension in $\mathcal{X}_{m_1,m_2}$;
				\item[$\mathrm{(3)}$] $\mathcal{M}_{m_1,m_2}$ is an exponentially attracting set, that is, there holds a constant $\omega > 0$ such that, for every
				bounded set $\mathcal{B} \subset \mathcal{X}_{m_1,m_2}$, there is a positive constant $J$ satisfying
				$$
				\mathrm{dist}_{\bm{L}^2\times H^1\times L^2}(\mathcal{S}(t)\mathcal{B},\mathcal{M}_{m_1,m_2})\leq Je^{-\omega t},\quad \forall\,t\geq 0.
				$$
			\end{itemize}
\end{theorem}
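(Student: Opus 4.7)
The plan is to invoke the abstract construction of exponential attractors (cf.\ \cite{MZ08}), which on a complete metric space $\mathcal{X}_{m_1,m_2}$ requires three ingredients: (a) a bounded positively invariant absorbing set $\mathcal{B}$ with extra regularity; (b) H\"older continuity of the map $(t,z)\mapsto \mathcal{S}(t)z$ on $\mathcal{B}$; and (c) a \emph{smoothing property} for the discrete map $\mathcal{S}(t^*)$ at some fixed time $t^*>0$, controlling the difference of two trajectories in a space compactly embedded into $\mathcal{X}_{m_1,m_2}$ by their initial $\mathcal{X}_{m_1,m_2}$-distance.

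First, from the proof of Theorem \ref{attr} the global attractor $\mathcal{A}_{m_1,m_2}$ is bounded in $\bm{H}^1\times H^3\times H^1$, and the underlying dissipative estimates furnish an absorbing set $\mathcal{B}_0$ bounded in this higher-regularity space. Replacing $\mathcal{B}_0$ by $\mathcal{B}:=\overline{\bigcup_{t\geq t_0}\mathcal{S}(t)\mathcal{B}_0}^{\,\mathcal{X}_{m_1,m_2}}$ for a sufficiently large $t_0$ yields a positively invariant absorbing set of the same kind. On $\mathcal{B}$, Theorem \ref{2main} provides uniform bounds on $\partial_t\bm{v}$ in $\bm{L}^2_{0,\mathrm{div}}$, on $\partial_t\varphi$ in $H^1$, and on $\partial_t\sigma$ in $L^2$, which immediately deliver the required H\"older-in-time continuity on $\mathcal{B}$.

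Second, and crucially, I would establish the \emph{strict separation property}: there exists $\delta\in(0,1)$, depending only on $\mathcal{B}$, such that $\|\varphi(t)\|_{L^\infty}\leq 1-\delta$ for every $t\geq 0$ and every trajectory $(\bm{v},\varphi,\sigma)$ issuing from $\mathcal{B}$. In two dimensions this can be obtained via a Moser/De Giorgi iteration applied to $\varPsi_0'(\varphi)$, exploiting the growth condition (H3) and the uniform $L^\infty(0,\infty;H^1)$-bound on $\mu+\chi\sigma$ on $\mathcal{B}$. Once separation holds, the stronger assumption $\varPsi\in C^4(-1,1)$ renders $\varPsi^{(k)}(\varphi)$ uniformly bounded for $k\leq 4$ along trajectories in $\mathcal{B}$, so along $\mathcal{B}$ the Cahn--Hilliard equation effectively has a globally smooth nonlinearity.

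Third, and the main obstacle, is the smoothing property. Given $z_{0i}=(\bm{v}_{0i},\varphi_{0i},\sigma_{0i})\in\mathcal{B}$, write $(\bm{v}_i,\varphi_i,\sigma_i)=\mathcal{S}(t)z_{0i}$ and $(\bm{w},\psi,\tau)=(\bm{v}_1-\bm{v}_2,\varphi_1-\varphi_2,\sigma_1-\sigma_2)$. Subtracting the equations and exploiting the representation
\begin{align*}
\varPsi'(\varphi_1)-\varPsi'(\varphi_2)=\psi\int_0^1\varPsi''(s\varphi_1+(1-s)\varphi_2)\,ds,
\end{align*}
which, thanks to the separation property, is a genuine $L^\infty$-Lipschitz multiplier, I would first bound
\begin{align*}
\Lambda(t):=\|\bm{w}(t)\|^2+\|\psi(t)\|_{H^1}^2+\|\tau(t)\|^2
\end{align*}
by a Gronwall-type argument improving \eqref{conti1} (using the uniformity of all coefficients on $\mathcal{B}$). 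A second, parabolic-regularization step then tests the difference equations against $t\,\bm{S}\bm{w}$, $t\,\Delta^2\psi$ and $t\,\Delta\tau$ (or functional analogues), absorbing the cross-terms $(\mu+\chi\sigma)\nabla\varphi$ and $\chi\Delta\varphi$ through Ladyzhenskaya and Agmon inequalities together with the Stokes and Neumann-Laplace regularity, to produce
\begin{align*}
\|\mathcal{S}(t^*)z_{01}-\mathcal{S}(t^*)z_{02}\|_{\bm{H}^1\times H^2\times H^1}^2\leq C(t^*)\,\mathrm{d}(z_{01},z_{02})^2.
\end{align*}
Since $\bm{H}^1\times H^2\times H^1$ is compactly embedded in $\bm{L}^2\times H^1\times L^2$, the abstract theorem of \cite{MZ08} then yields an exponential attractor $\mathcal{M}_{m_1,m_2}\subset\mathcal{B}$ fulfilling (1)--(3). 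The delicate point throughout is that the coupled cross-terms $(\mu+\chi\sigma)\nabla\varphi$ in the momentum equation and $\chi\Delta\varphi$ in the nutrient equation force one to treat $\bm{w}$, $\psi$ and $\tau$ simultaneously at the top regularity level, while keeping the right-hand side depending only on the \emph{low} norm $\mathrm{d}(z_{01},z_{02})$; the separation property and the already-established dissipative bounds on $\mathcal{B}$ are precisely what make this possible.
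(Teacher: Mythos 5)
Your proposal follows essentially the same route as the paper: a compact, positively invariant absorbing set $\widetilde{\mathcal{B}}_1=\overline{\bigcup_{t\geq t_1}\mathcal{S}(t)\mathcal{B}_1}$ bounded in $\bm{H}^1\times H^3\times H^1$ and enjoying the strict separation property, H\"older-in-time and Lipschitz-in-data continuity, a smoothing estimate obtained exactly as you describe (a low-order Gronwall bound followed by testing the difference system with $\bm{S}\bm{v}$, $\Delta^2\varphi$, $-\Delta\sigma$ with a time weight $t$, using separation and $\varPsi\in C^4$ to treat the nonlinearity as globally smooth), and finally an abstract exponential-attractor theorem plus transitivity of exponential attraction to all bounded sets. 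The only cosmetic differences are that the paper obtains the separation property by citing the elliptic estimates and \cite[Lemma 3.2]{H1} rather than a Moser/De Giorgi iteration, and invokes the abstract result of \cite{BG,GGMP06} rather than \cite{MZ08}; neither affects the substance.
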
}}

 \section{Global attractor}\label{ws}
 \setcounter{equation}{0}
 We prove the existence of a global attractor for system \eqref{f1.a}--\eqref{ini0}, that is, in a suitable phase space, the global attractor is a unique compact set which is invariant under the semigroup generated by the evolution system and is an absorbing set for any bounded set when time goes to infinity.

Before starting the proof, we obtain the following uniform estimates which not only hold in the two-dimensional case but also in the three-dimensional case.
 \begin{lemma}\label{lm1} Assume that the hypotheses (H1), (H2) and (H4)--(H6) are satisfied.
The weak solution to problem \eqref{f3.c}--\eqref{ini0} exists on  $[0,+\infty)$ and satisfies the following uniform-in-time estimates:
 	\begin{align}
 		&\|\bm{v}(t)\|^2 +\|\varphi(t)\|_{H^1}^2+\|\sigma(t)\|^2 \notag \\
 		&\quad +\int_{t}^{t+1}\big(\|\nabla\bm{v}(s)\|^2
 		+\|\varphi(s)\|_{H^2}^4 + \|\nabla \mu(s)\|^2+ \|\nabla \sigma(s)\|^2\big)\,  d s \le C,\quad \forall\, t\geq 0,
 		\label{uni1}\\
 		&\int_{t}^{t+1}\big( \left\|\partial_{t} \varphi(s)\right\|_{(H^1)'}^{2} +\left\| \partial_{t} \sigma(s)\right\|_{(H^1)'}^2\big)\,ds
 		\leq C,\quad \forall\, t\geq 0,\label{uni2}
 	\end{align}
 	where the constant $C$ depends on $\|\bm{v}_0\|_{\bm{H}^1}$, $\|\mu_0\|_{H^1}$, $\|\sigma_0\|_{H^1}$, $\|\varphi_0\|_{H^1}$, $\int_\Omega \varPsi(\varphi_0)dx$, coefficients of the system and $\Omega$, but not on $t$.
 
 \end{lemma}
 
 \textbf{Proof}. The existence of a global weak solution $(\bm{v}, \varphi, \mu, \sigma)$ on an arbitrary time interval $[0,T]$ can be obtained by applying a similar argument in the proof of \cite[Theorems 2.1]{H}, where the author used a semi-Galerkin method such that the approximate solution satisfies 
 \be
 \|\varphi(t)\|_{L^\infty}\leq 1,\quad t\geq 0.\label{inif}
 \ee 
  Thus, we only need to derive \textit{a priori} estimates that are uniform in time, and 
 we keep in mind that \eqref{inif} holds in this process.
 
 Integrating \eqref{f1.a} over $\Omega$, we get
 \be
 \frac{d}{dt}\int_\Omega \varphi \,dx =-\int_\Omega M(x,\varphi)\, dx,\label{parphi}
 \ee
 Arguing as in \cite[Section 3.4]{EL} or \cite[Section 2.2]{GLS}, recalling (H3) and \eqref{inif}, we can infer that
 \be
 |\overline{\varphi}(t)|<1,\quad \forall\, t\geq 0.
 \label{averphie1}
 \ee

 Next, integrating \eqref{f2.b} over $\Omega$, we get
 \be
 \frac{d}{dt}\int_\Omega \sigma dx =\int_\Omega S(x,t)\, dx,\notag
 \ee
 so that
 \be
 \overline{\sigma}(t)=\overline{\sigma_{0}}+\int_0^t \overline{S(s)}ds,\quad \forall\, t\geq 0.
 \label{averphie}
 \ee
 Then it follows from the Poincar\'{e}--Wirtinger inequality that
 \be
 \|\sigma\|\leq C_P(\|\nabla \sigma\|+|\overline{\sigma}|)\leq C_P\|\nabla \sigma\|+C,
 \label{sigmaa}
 \ee
 where $C_P$ depends on $\Omega$ and $C$ depends on $\overline{\sigma_0}$, $S_0$, $\Omega$.
 
 Multiplying \eqref{f3.c} by $\bm{v}$, \eqref{f1.a} by $\mu$, \eqref{f4.d} by  $\partial_t \varphi$,\ \eqref{f2.b} by $\sigma+\chi(1-\varphi)$, integrating over $\Omega$ and adding the resultants together, we obtain (i.e., taking $\mathcal{C}=0$ in \eqref{BEL})
 \begin{align}
 	\frac{d}{dt}\mathcal{E}(t)
 	+\mathcal{D}(t)=-\int_\Omega M(x,\varphi)\mu \, dx +   \int_\Omega S(\sigma +\chi(1-\varphi))\, dx,
 	\label{BEL2}
 \end{align}
 where
 \begin{align}
 	\mathcal{E}(t)&=\frac{1}{2}\|\bm{v}(t)\|^2
 	+ \int_\Omega A\varPsi(\varphi(t))\, dx + \frac{B}{2}\|\nabla \varphi(t)\|^2
 	+\frac{1}{2}\|\sigma(t)\|^2\notag\\
 	&\quad
 	+\int_\Omega \chi\sigma(t)(1-\varphi(t))\, dx,\label{E}\\
 	\mathcal{D}(t)& =\int_\Omega 2\eta(\varphi(t))|D\bm{v}(t)|^2 + |\nabla \mu(t)|^2\, dx+\|\nabla(\sigma(t)+\chi(1-\varphi(t))\|^2.\label{D}
 \end{align}
 Next, using the Cauchy--Schwarz inequality, Young's inequality and \eqref{inif}, we easily see that
 \begin{align*}
 	\int_\Omega S(\sigma +\chi(1-\varphi)) dx
 	\leq \xi \|\sigma\|^2+\xi^{-1}\|S\|^2+C,
 \end{align*}
 where $\xi\in (0,1)$, the positive constant $C$ may depend on $\chi$, $\Omega$, but is independent of the initial data.
 Next, multiplying \eqref{f4.d} by $\Delta\varphi$ and integrating over $\Omega$, we get
 \begin{align}
 	&B\|\Delta\varphi\|^2 
 	=A \int_{\Omega}\varPsi'(\varphi)\Delta\varphi\, dx -\chi\int_\Omega \sigma\Delta\varphi\, dx-\int_\Omega\nabla \mu\cdot\nabla\varphi\,dx .
 	\label{energy e}
 \end{align}
 The three terms on the right-hand side of \eqref{energy e} can be estimated as follows:
 \begin{align}
 	A\int_{\Omega}\varPsi'(\varphi)\Delta\varphi \ dx
 	&=- A\int_{\Omega}\varPsi_{0}''(\varphi)|\nabla\varphi|^2 \, dx
 	-A\theta_{0}\int_{\Omega}\varphi\Delta\varphi\ dx\notag\\
 	&\le \frac{B}{8}\|\Delta\varphi\|^2  +\frac{2A^2\theta_0^2}{B}\|\varphi\|^2,\label{diss1}
 \end{align}
 \begin{align}
 	-\chi \int_{\Omega} \sigma \Delta\varphi dx
 	&=\chi\int_{\Omega}\nabla\varphi\cdot \nabla\sigma \, dx \leq \chi^2\|\nabla\varphi\|^2+\frac{1}{4}\|\nabla\sigma\|^2 \notag\\
 	& \le \frac{B}{8}\|\Delta\varphi\|^2  +\frac{1}{4}\|\nabla\sigma\|^2 +\frac{2\chi^4}{B}\|\varphi\|^2,
 	\label{diss2}
 \end{align}
 \begin{align}
 	- \int_{\Omega} \nabla\mu\cdot \nabla\varphi dx
 	& \leq \|\nabla\mu\|\|\nabla\varphi\| \notag\\
 	& \le \frac{1}{2}\|\nabla\mu\|^2+\frac{B}{8}\|\Delta\varphi\|^2   +\frac{1}{2B}\|\varphi\|^2.
 	\label{diss3}
 \end{align}
 In light of \eqref{inif} and \eqref{energy e}--\eqref{diss3}, we have
 \begin{align}
 	&\frac{B}{2}\|\Delta\varphi\|^2 +\|\nabla\varphi\|^2 \le \frac{1}{2}\|\nabla\mu\|^2+\frac{1}{4}\|\nabla\sigma\|^2+C.
 	\label{energy e0}
 \end{align}
Now multiplying \eqref{f2.b} by $\sigma$ and integrating over $\Omega$, we get
\begin{align}
	\frac12 \frac{d}{dt}\|\sigma\|^2
	+\|\nabla \sigma\|^2
	=-\chi\int_{\Omega} \sigma \Delta\varphi  \ dx +\int_\Omega S\sigma dx.
	\notag
\end{align}
In light of \eqref{diss2}, we get
\begin{align}
	\frac12 \frac{d}{dt}\|\sigma\|^2
	+\frac34\|\nabla \sigma\|^2
	\leq \frac{B}{4}\|\Delta\varphi\|^2 +\xi\|\sigma\|^2 +\xi^{-1}\|S\|^2 +C.
	\label{energy e3}
\end{align} 

Adding \eqref{energy e} with \eqref{energy e3} and using \eqref{sigmaa}, we see that
\begin{align}
	\frac12 \frac{d}{dt}\|\sigma\|^2
	+\frac{B}{2}\|\Delta\varphi\|^2 + \frac12\|\nabla \sigma\|^2 & \leq \xi\|\sigma\|^2 +\xi^{-1}\|S\|^2+C\notag\\
	&\leq \xi C_P^2\|\nabla \sigma\|^2+\xi^{-1}\|S\|^2+C(1+\xi^{-1}).
	\label{energy e5}
\end{align}
Recalling (H3) and \cite[Lemma 2.2]{L2022}, we have
\be
\int_{\Omega} \left|\varPsi_0^{\prime}(\varphi)\right| \mathrm{d} x \leq C(1+\int_{\Omega} \varPsi_0^{\prime}(\varphi) M(x, \varphi)\, \mathrm{d} x).
\ee
On account of the boundedness of $M$, the first on the right-hand side of \eqref{BEL2} can be estimated as follows:
\begin{align}
	-\int_\Omega M(x,\varphi)\mu \, dx
	&= B\int_\Omega M(x,\varphi)\Delta\varphi\,dx -  A\int_\Omega M(x,\varphi)\varPsi'(\varphi)\,dx + \chi\int_\Omega M(x,\varphi)\sigma\,dx\notag\\
	&\leq \frac{B}{8}\|\Delta \varphi\|^2
	-  A\int_\Omega M(x,\varphi)\varPsi_0'(\varphi)\,dx+A\theta_0\int_\Omega M(x,\varphi)\varphi\,dx\notag\\
	&\quad
	- \chi\int_\Omega M(x,\varphi)\sigma\,dx+C\notag\\
	& \leq \frac{B}{8}\|\Delta \varphi\|^2
	-\|\varPsi_0'(\varphi)\|_{L^1}+\xi\|\sigma\|^2+C.\label{mueea}
\end{align} 
Recalling Korn's inequality and Poincar\'{e}'s inequality, we get
\be
\|\bm{v}\| \le C_P'\|D\bm{v}\|, \label{po}
\ee
where the constant $C_P'$ only depends on $\Omega$. Noticing \eqref{inif}, we have
\begin{align}
	\mathcal{E}(t)&=\frac{1}{2}\|\bm{v}(t)\|^2
	+ \int_\Omega A\varPsi(\varphi(t))\, dx + \frac{B}{2}\|\nabla \varphi(t)\|^2
	+\frac{1}{2}\|\sigma(t)\|^2\notag\\
	&\quad
	+\int_\Omega \chi\sigma(t)(1-\varphi(t))\, dx\notag\\
	&\le C(\|D\bm{v}\|^2+\|\nabla\sigma\|^2+\|\nabla\varphi\|^2+1).\label{E1}
\end{align}
Using \eqref{mueea} and \eqref{E1}, we add \eqref{BEL2} with \eqref{energy e5}. We can first take $\xi\in(0,1)$ to be sufficiently small and then find a positive constant $\zeta\in (0,\xi)$ such that
\begin{align}
	&\frac{d}{dt}\widetilde{\mathcal{E}}(t)
	+\widetilde{\mathcal{D}}(t) + \zeta \widetilde{\mathcal{E}}(t)\leq  C(1+\|S\|^2),
	\label{BEL5}
\end{align}
where
\begin{align}
	&\widetilde{\mathcal{E}}(t)=\mathcal{E}(t)+\frac12 \|\sigma(t)\|^2+C_1,\notag \\
	&\widetilde{\mathcal{D}}(t)=\frac12 \mathcal{D}(t)+ \frac{B}{4}\|\Delta \varphi(t)\|^2+ \frac{1}{4}\|\nabla \sigma(t)\|^2+\|\varPsi'(\varphi)\|_{L^1}.\notag
\end{align}
The constants $\xi$, $\zeta$ as well as $C$ only depend on the coefficients of the system, $\Omega$, $\overline{\sigma_0}$, $S_0$. $C_1$ is a positive constant that guarantees $\widetilde{\mathcal{E}}(t)\geq 1$ and depends on $\varPsi$, $A$, $\chi$, $\Omega$, $\overline{\sigma_0}$, $S_0$, but not on other information of the initial data.
 
Applying a suitable Gronwall's lemma (see, e.g., \cite[Lemma 2.5]{GGP}) to the differential inequality \eqref{BEL5}, we deduce that for all $t \geq 0$:
\begin{equation}
	\begin{aligned}
		\widetilde{\mathcal{E}}(t)
		\leq \widetilde{\mathcal{E}}(0) \mathrm{e}^{-\zeta t} + C \int_0^te^{-\zeta(t-s)}(1+\|S(s)\|^2)\,ds \leq \widetilde{\mathcal{E}}(0) \mathrm{e}^{-\zeta t}+C,
		\label{bd22}
	\end{aligned}
\end{equation}
and
\begin{align}
	\int_{t}^{t+1} \widetilde{\mathcal{D}}(s) \, d s \leq \widetilde{\mathcal{E}}(0) \mathrm{e}^{-\zeta t}+C,
	\label{bd2a2}
\end{align}
where the constant $C$ depends on the coefficients of the system, $\Omega$,  $\overline{\sigma_0}$, $S_0$, but not on other information of the initial data.

Hence, in view of the definitions of $\widetilde{\mathcal{E}}(t)$ and $\widetilde{\mathcal{D}}(t)$, we can conclude from the dissipative estimates \eqref{bd22}, \eqref{bd2a2} that
\begin{align}
	&\|\bm{v}(t)\|^2 +\|\varphi(t)\|_{H^1}^2+\|\sigma(t)\|^2 \notag \\
	&\quad +\int_{t}^{t+1}\big(\|\nabla\bm{v}(s)\|^2 +\|\Delta\varphi(s)\|^2 + \|\nabla \mu(s)\|^2+ \|\nabla \sigma(s)\|^2+\|\varPsi'(\varphi(s))\|_{L^1}\big)\,  d s \notag \\
	& \le C,\quad \forall\, t\geq 0,
	\notag
\end{align}
where the constant $C$ is independent of $t$.
Recalling \eqref{f4.d}, we imply from the Poincar\'{e}--Wirtinger inequality that
\begin{equation}
	\left \|  \mu  \right \|_{H^1 }\le  C (1+\|  \nabla \mu\|).
	\label{2amuL2H1}
\end{equation}

Using the above uniform-in-time estimate and by the same argument as in \cite[Section 3.2]{H} (see also \cite{GGM2017,GGW} for the cases without coupling with $\sigma$), it is straightforward to check that for all $t \geq 0$,
\begin{align}
	&\int_{t}^{t+1}\big(\|\varphi(s)\|_{H^2}^4+\| \mu(s)\|^{2}_{H^1(\Omega)}+ \|\varphi(s)\|^2_{W^{2,q}}+\|\varPsi_0'(\varphi(s))\|^2_{L^q}\big) \, d s \leq C,  \notag\\
	&\int_{t}^{t+1}\big( \left\|\partial_{t} \varphi(s)\right\|_{(H^1)'}^{2} +\left\| \partial_{t} \sigma(s)\right\|_{(H^1)'}^2\big)\,ds \leq C,\notag
\end{align}
where $q\in [2,+\infty)$ and the positive constant $C$ is independent of $t$.
As a consequence, we arrive at our conclusions \eqref{uni1} and \eqref{uni2}. 

The proof of Lemma \ref{lm1} is complete.
\hfill$\square$ 
 
Then, we prove Theorem \ref{attr} on the existence of global weak solutions to problem \eqref{f3.c}--\eqref{ini0}.

\textbf{Step 1.} From Theorem \ref{2main}, the system \eqref{f3.c}--\eqref{ini0} generates a dynamical system $(\mathcal{X}_{m_1,m_2},\mathcal{S}(t))$, which is a strongly continuous semigroup,
$$
\mathcal{S}(t): \mathcal{X}_{m_1,m_2}\to \mathcal{X}_{m_1,m_2}, \quad t \geq 0,
$$
acting by the formula
$$
\mathcal{S}(t)(\bm{v}_0,\varphi_0,\sigma_0)=(\bm{v}(t),\varphi(t),\sigma(t)).
$$
Here $(\bm{v}(t),\varphi(t),\sigma(t))$ is the unique solution to system \eqref{f3.c}--\eqref{ini0} in two dimensions. The one-parameter family of maps $\mathcal{S}(t)$ on $\mathcal{X}_{m_1,m_2}$ satisfy:
\be
\begin{aligned}
	&\mathcal{S}(0)=\operatorname{Id}_{\mathcal{X}_{m_1,m_2}},\\
	&\mathcal{S}(t+\tau)=\mathcal{S}(t) \mathcal{S}(\tau),\quad\forall\,  t, \tau \geq 0,\\
	&t \mapsto \mathcal{S}(t)(\bm{v}_0,\varphi_0,\sigma_0) \in \mathcal{C}\left([0, \infty), \mathcal{X}_{m_1,m_2}\right),\quad\forall\,(\bm{v}_0,\varphi_0,\sigma_0)\in \mathcal{X}_{m_1,m_2}.
\end{aligned}
\ee

We consider $(\bm{v}_0,\varphi_0,\sigma_0)  \in \mathcal{B}\subset \mathcal{X}_{m_1,m_2}$ and $(\bm{v}(t),\varphi(t),\sigma(t))=S(t)(\bm{v}_0,\varphi_0,\sigma_0)$, for any $t \geq 0$.

It can be implied from the above estimates the dissipativity property of $\mathcal{S}(t)$ in $\mathcal{X}_{m_1,m_2}$, that is,
$\mathcal{S}(t)$ possesses a bounded absorbing ball
$$
\mathcal{B}_0 =\{ (\bm{v},\varphi,\sigma)\in\mathcal{X}_{m_1,m_2}\, :\, \|\bm{v}\|+|\varphi\|_{H^1}+\|\sigma\| \le R_0\},
$$
where the radius $R_0>0$ depends on coefficients of the system, $\Omega$, $m_1$, $m_2$ and but is independent of the initial data. For
every bounded set $\mathcal{B} \subset \mathcal{X}_{m_1,m_2}$, there exists a time $t_0 = t_0(\mathcal{B}) > 0$ satisfying
\begin{equation}
\mathcal{S}(t)\mathcal{B}\subset \mathcal{B}_0,\quad \forall\,t\geq t_0.\label{abso1}
\end{equation}
For any fixed constant $r > 0$, integrating \eqref{BEL5} on the time interval $[t, t + r]$, we obtain
\begin{align}
\int_{t}^{t+r}\big(\|\nabla\bm{v}(s)\|^2
+\|\varphi(s)\|_{H^2}^2 + \| \mu(s)\|^2_{H^1}+ \|\nabla \sigma(s)\|^2\big)\,  d s \le C,\quad \forall\,t \ge t_0(\mathcal{B}),\label{uni11}
\end{align}
where the constant $C$ depends on the coefficients of the system, $\Omega$,  $\overline{\sigma_0}$ and $r$.

From the same argument as in \cite[Section 3.2]{H} (see also \cite{GGM2017,GGW} for the cases without coupling with $\sigma$), it can be implied that
\begin{align}
\int_{t}^{t+r}\big( \left\|\partial_{t} \varphi(s)\right\|_{(H^1)'}^{2} +\left\| \partial_{t} \sigma(s)\right\|_{(H^1)'}^2\big)\,ds \leq C, \quad \forall\,t \ge t_0(\mathcal{B}).\label{uni23}
\end{align}

\textbf{Step 2.} 
Then we obtain the higher order estimates. Multiplying equation \eqref{f3.c} by $\bm{S} \bm{v}$ and integrating over $\Omega$, we obtain
\begin{equation}
	\frac{1}{2} \frac{d}{d t}\left\|\nabla\bm{v}\right\|^{2} +\left(\bm{v}\cdot\nabla\bm{v}, \bm{S}\bm{v}\right)-2\left(\operatorname{div}\left(\eta\left(\varphi\right) D\bm{v}\right), \bm{S}\bm{v}\right)=\left((\mu+\chi\sigma) \nabla \varphi, \bm{S}\bm{v}\right). \label{v3}
\end{equation}
Following the computations in the proof of \cite[Theorem 4.1]{GMT}, we obtain
\be
\begin{aligned}
	-2\left(\operatorname{div}\left(\eta\left(\varphi\right) D \bm{v}\right), \bm{S}\bm{v}\right)
	&=-\left(\eta\left(\varphi\right) \Delta \bm{v}, \bm{S}\bm{v}\right)-2\left(\eta^{\prime}\left(\varphi\right) D \bm{v} \nabla \varphi, \bm{S}\bm{v}\right)\\
	&\geq \frac{5\eta_{*}}{6}\left\|\bm{S}\bm{v}\right\|^{2} -C\big(1+\left\|\varphi\right\|_{H^{2}}^{4}\big)\left\|\nabla \bm{v}\right\|^{2},
\end{aligned}\notag
\ee
\begin{equation}
	\begin{aligned}
		|\left((\bm{v}\cdot\nabla)\bm{v}, \bm{S}\bm{v}\right)|
		& \leq\left\|\bm{v}\right\| _{\bm{L}^{4}}\left\|\nabla \bm{v}\right\| _{\bm{L}^{4}}\left\|\bm{S}\bm{v}\right\|
		\leq \frac{\eta_{*}}{6}\left\|\bm{S}\bm{v}\right\|^{2} +C\left\|\nabla \bm{v}\right\|^{4},
	\end{aligned}
	\notag
\end{equation}
where the positive constant $C$ may depend on $\eta_*$, $\Omega$ and coefficients of the system. Next, in light of Young's inequality and the Sobolev embedding theorem, we have
\begin{equation}
	\begin{aligned}
		\left((\mu+\chi\sigma) \nabla \varphi, \bm{S} \bm{v}\right)
		& \leq\left\|\mu+\chi\sigma\right\|_{L^{6}}\left\|\nabla \varphi\right\|_{\bm{L}^{3}} \left\|\bm{S} \bm{v}\right\| \\
		& \leq \frac{\eta_{*}}{6}\left\|\bm{S} \bm{v}\right\|^{2} +C\left\|\varphi\right\|_{H^{2}}^{2} \big(1+\left\|\nabla\mu\right\|^{2}+\|\sigma\|_{H^1}^2\big).
	\end{aligned}\notag
\end{equation}
Combining the above estimates, we deduce from \eqref{v3} that
\begin{align}
	\frac{1}{2} \frac{d}{d t}\left\|\nabla \bm{v}\right\|^{2}+\frac{\eta_{*}}{2}\left\|\bm{S} \bm{v}\right\|^{2}
	& \leq C\left\|\nabla \bm{v}\right\|^{4}+ C(1+\|\varphi\|_{H^{2}}^{4})\left\|\nabla \bm{v}\right\|^{2}\notag\\
	&\quad+C\|\varphi\|_{H^{2}}^{2}(1+\| \nabla\mu\|^{2}+\|\sigma\|_{H^1}^2).
	\label{sv}
\end{align}
Multiplying equation \eqref{f3.c} by $\partial_{t} \bm{v}$ and integrating over $\Omega$, we have
\begin{equation}
	\left\|\partial_{t}\bm{v}\right\|^{2} +\left(\bm{v}\cdot\nabla\bm{v}, \partial_{t}\bm{v}\right) -2\left(\operatorname{div}\left(\eta\left(\varphi\right) D\bm{v}\right), \partial_{t}\bm{v}\right)=\left((\mu+\chi\sigma) \nabla \varphi, \partial_{t}\bm{v}\right).
	\label{v1}
\end{equation}
By Young's inequality and the Sobolev embedding theorem, we have
\begin{align}
	\left((\mu+\chi\sigma) \nabla \varphi, \partial_{t} \bm{v}\right)
	&  \leq \left\|\mu+\chi\sigma\right\|_{L^{6}}\left\|\nabla \varphi\right\|_{\bm{L}^{3}}\left\|\partial_{t} \bm{v}\right\| \notag\\
	& \leq \frac{1}{6}\left\|\partial_{t} \bm{v}\right\|^{2}+C\left\|\varphi\right\|_{H^{2}}^{2} \big(1+\left\| \nabla\mu\right\|^{2}+\|\sigma\|_{H^1}^2\big). \notag
\end{align}
The other two terms on the left hand side of \eqref{v1} can be estimated exactly as in the proof of \cite[Theorem 4.1]{GMT} such that
\begin{equation}
	\begin{aligned}
		|\left(\bm{v}\cdot\nabla\bm{v}, \partial_{t}\bm{v}\right) | &  \leq \frac{1}{6}\left\|\partial_{t}\bm{v}\right\|^{2} +C\big(\left\|\bm{S}\bm{v}\right\|^{2} +\left\|\nabla\bm{v}\right\|^{4}\big),\\
		2\left(\operatorname{div}\left(\eta\left(\varphi\right) D \bm{v}\right), \partial_{t} \bm{v}\right)
		&\leq \frac{1}{6}\left\|\partial_{t} \bm{v}\right\|^{2}+C\big(\left\|\bm{S} \bm{v}\right\|^{2}+\left\|\varphi\right\|_{H^{2}}^{2}\left\|\nabla \bm{v}\right\|^{2}\big).
	\end{aligned}\notag
\end{equation}
As a consequence,
\be
\begin{aligned}
	\left\|\partial_{t} \bm{v}\right\|^{2} &\leq C_2 \left\|\bm{S} \bm{v}\right\|^{2}+ C\left\|\nabla \bm{v}\right\|^{4} + C\left\|\varphi\right\|_{H^{2}}^{2} \big(1+\left\|\nabla \bm{v}\right\|^{2}+\left\| \nabla\mu\right\|^{2}+\|\sigma\|_{H^1}^2\big).\label{vt}
\end{aligned}
\ee
Multiplying \eqref{vt} by $\varpi=\frac{\eta_{*}}{4 C_{2}}>0$ and adding the resultant with \eqref{sv}, we find
\begin{equation}
	\begin{aligned}
		&\frac{1}{2} \frac{d}{d t}\|\nabla \bm{v}\|^{2} +\frac{\eta_{*}}{4}\|\bm{S} \bm{v}\|^{2} +\varpi\|\partial_{t} \bm{v}\|^{2}\\
		& \quad \leq C\|\nabla \bm{v}\|^{4}+ C\big(1+\|\varphi\|_{H^{2}}^{4})(1+\|\nabla \bm{v}\|^{2}+\| \nabla\mu\|^{2}+\|\sigma\|_{H^1}^2\big).
		\label{2v1}
	\end{aligned}
\end{equation}
\textbf{Estimate for $\|\nabla \mu\|$}.
Multiplying equation \eqref{f1.a} by $\partial_{t} \mu$ and integrating over $\Omega$, we obtain
\be
\begin{aligned}
	 &\frac{d}{d t}\left(\frac12\int_{\Omega}|\nabla \mu|^{2}\,dx\right)+\left( \partial_{t} \varphi,\partial_{t} \mu\right)+\left( \bm{v} \cdot \nabla \varphi,\partial_{t} \mu\right)+(M(x,\varphi),\partial_{t} \mu)=0.\label{mu}
\end{aligned}
\ee
As in \cite{MT}, we write
\be
\begin{aligned}\left( \bm{v} \cdot \nabla \varphi,\partial_{t} \mu\right) = \frac{d}{d t} \left(\bm{v} \cdot \nabla \varphi, \mu\right)  -\left(\partial_{t} \bm{v}\cdot \nabla \varphi, \mu\right)-\left(\bm{v} \cdot \nabla \partial_{t} \varphi, \mu\right). \end{aligned}\label{d1}
\ee
Then by Young's inequality, the Sobolev embedding theorem and \eqref{2amuL2H1}, we get
\begin{align}
	(\partial_{t}\bm{v} \cdot \nabla \varphi, \mu) &\le\left\|\nabla \varphi\right\|_{\bm{L}^{3}}\left\|\partial_{t} \bm{v}\right\|\left\|\mu\right\|_{L^{6}} \leq \frac{\varpi}{4}\left\|\partial_{t} \bm{v}\right\|^{2} +C \left\|\varphi\right\|_{H^{2}}^{2} \big(1+\left\|\nabla \mu\right\|^{2} +\|\sigma\|_{H^1}^2\big),\nonumber
\end{align}
and
\begin{equation}
	\begin{aligned}\left( \bm{v}\cdot \nabla \partial_{t} \varphi,\mu\right) & \leq\left\|\bm{v}\right\|_{ \bm{L}^3}\left\|\nabla \partial_{t} \varphi\right\|\left\|\mu\right\|_{L^{6}} \leq \frac{B}{4}\left\|\nabla \partial_{t} \varphi\right\|^{2}+C \left\|\nabla \bm{v}\right\|^2\big(1+\left\|\nabla \mu\right\|^{2}+\|\sigma\|_{H^1}^2\big).
	\end{aligned}\notag
\end{equation}
Using the Poincar\'{e}--Wirtinger inequality and recalling \eqref{inif}, we deduce that

\begin{align}
	|A(\theta-\theta_0)|\left\|\partial_{t} \varphi\right\|^{2}&=	|A(\theta-\theta_0)|(\Delta \mu-M(x,\varphi)-\bm{v} \cdot \nabla \varphi,\partial_{t} \varphi)\notag\\
	&=|A(\theta-\theta_0)|(\nabla \mu+\bm{v}  \varphi,\nabla\partial_{t} \varphi)+|A(\theta-\theta_0)|(-M(x,\varphi),\partial_{t} \varphi)\notag\\
	&\leq\frac{B}{16} \left\|\nabla \partial_{t} \varphi\right\|^{2}+\frac12|A(\theta-\theta_0)|\left\|\partial_{t} \varphi\right\|^{2} +C \big(1+\left\|\nabla\mu \right\|^2 +\left\| M(x,\varphi)\right\|^2+\|\bm{v}\|^2\big),\notag
\end{align}
thus we have
\begin{align}
	|A(\theta-\theta_0)|\left\|\partial_{t} \varphi\right\|^{2}
	&\leq\frac{B}{8} \left\|\nabla \partial_{t} \varphi\right\|^{2} +C \big(1+\left\|\nabla\mu \right\|^2 +\left\| M(x,\varphi)\right\|^2+\|\bm{v}\|^2\big),\notag
\end{align}
and
\begin{align}
	|\chi\left(  \partial_{t} \varphi,\partial_{t} \sigma\right)|
	&\leq C\left\| \partial_{t} \varphi\right\|_{H^1}\left\| \partial_{t} \sigma\right\|_{(H^1)'}\notag\\
	&\leq C(\left\| \partial_{t} \varphi\right\|+\left\|\nabla \partial_{t} \varphi\right\|)\left\| \partial_{t} \sigma\right\|_{(H^1)'}\notag\\
	&\leq\frac{B}{8} \left\|\nabla \partial_{t} \varphi\right\|^{2} +C \big(1+\left\|\nabla\mu \right\|^2 +\left\| M(x,\varphi)\right\|^2+\|\bm{v}\|^2+\left\| \partial_{t} \sigma\right\|^2_{(H^1)'}\big).
	\notag
\end{align}
Finally, from the assumption  (H2) and the above two estimates, we obtain
\begin{align}\left( \partial_{t} \varphi,\partial_{t} \mu\right)
	&=B\left\|\nabla \partial_{t} \varphi\right\|^{2} +A\left(\partial_{t} \varphi,\varPsi''\left(\varphi\right) \partial_{t} \varphi\right) -\chi\left(  \partial_{t} \varphi,\partial_{t} \sigma\right) \notag\\
	& \geq B\left\|\nabla \partial_{t} \varphi\right\|^{2}-|A(\theta-\theta_0)|\left\|\partial_{t} \varphi\right\|^{2} -|\chi(\partial_{t} \varphi,\, \partial_{t} \sigma)|\notag\\
	& \geq \frac{3B}{4}\left\|\nabla \partial_{t} \varphi\right\|^{2} -C \big(1+\left\|\nabla\mu \right\|^2 +\left\| M(x,\varphi)\right\|^2+\|\bm{v}\|^2+\left\| \partial_{t} \sigma\right\|^2_{(H^1)'}\big).
	\notag
\end{align}
Combining the above estimates, we deduce from  \eqref{mu} that
\begin{align}
	&\frac{d}{d t}\Big(\frac{1}{2}\|\nabla \mu\|^{2}+(\bm{v} \cdot \nabla \varphi, \mu)\Big) +\frac{B}{2}\|\nabla \partial_{t} \varphi\|^{2} \notag\\
	&\quad \leq \frac{\varpi}{4}\left\|\partial_{t} \bm{v}\right\|^{2} + C \big(1+\left\|\nabla\mu \right\|^2 +\left\| M(x,\varphi)\right\|^2+\|\bm{v}\|^2+\left\| \partial_{t} \sigma\right\|^2_{(H^1)'}\big) \notag\\
	&\qquad  +C \big(1+\|\varphi\|_{H^{2}}^{2}+\|\nabla\boldsymbol{v}\|^{2}\big) \big(1+\|\nabla \mu\|^{2}+\|\sigma\|_{H^1}^2\big).
	\label{22mu}
\end{align}
\textbf{Estimate for $\|\nabla \sigma\|$}. Multiplying \eqref{f2.b} by $-\Delta\sigma$ and integrating over $\Omega$, we get
\be
\frac{1}{2} \frac{d}{d t}\left\|\nabla\sigma\right\|^{2}-(\bm{v} \cdot \nabla \sigma,\Delta\sigma)+\|\Delta \sigma\|^2= \chi (\Delta \varphi,\Delta\sigma) - (S,\Delta\sigma ).
\ee
By the Gagliardo--Nirenberg inequality and Young's inequalities, we see that
\begin{align}
	|(\bm{v} \cdot \nabla \sigma,\Delta\sigma)|
	&\le \|\bm{v}\|_{\bm{L}^{4}}\| \nabla\sigma\|_{\bm{L}^{4}}\|\Delta\sigma\|\notag\\
	&\le\frac{1}{8}\|\Delta\sigma\|^2 + C\|\bm{S}\bm{v}\|^{\frac{1}{2}} \|\bm{v}\|^{\frac{3}{2}}\|\sigma\|_{H^2} \|\sigma\|_{H^1}\notag\\
	&\leq \frac{1}{8}\|\Delta\sigma\|^2 + C\|\bm{S}\bm{v}\|^{\frac{1}{2}} \|\bm{v}\|^{\frac{3}{2}}(\|\Delta \sigma\|+\|\sigma\|)\|\sigma\|_{H^1} \notag\\
	&\le\frac{1}{4}\|\Delta\sigma\|^2 +\frac{\eta_*}{8}\|\bm{S}\bm{v}\|^{2} +
	C\big(\|\bm{v}\|^{6}\|\sigma\|_{H^1}^{4} +\|\bm{v}\|^{6}+\|\sigma\|_{H^1}^{4}\big).\notag
\end{align}
The remaining terms can be easily handled by using Young's inequality
\begin{align*}
	|\chi (\Delta \varphi,\Delta\sigma) - (S,\Delta\sigma ) | \leq \frac14\|\Delta \sigma\|^2+C\big(\|\Delta\varphi\|^2+\|\sigma\|^2+\|S\|^2\big).
\end{align*}
Hence, we have
\begin{align}
	&\frac{1}{2} \frac{d}{d t}\left\|\nabla\sigma\right\|^{2}+\frac{1}{2}\|\Delta \sigma\|^2 \notag\\
	&\quad \leq \frac{\eta_*}{8}\|\bm{S}\bm{v}\|^{2}+ C \big(\left\|\Delta\varphi\right\|^{2} + \|\bm{v}\|^{6}\|\sigma\|_{H^1}^{4} +\|\bm{v}\|^{6} +\|\sigma\|_{H^1}^{4}+\left\|S\right\|^{2}\big).
	\label{2sig}
\end{align}


Recalling the higher order estimates in \cite[Section 3.3]{H1}, there exists a positive constant $\varpi$ depending coefficients of the system and $\Omega$ such that
\be
\frac{d}{d t}\Lambda_1(t)  +  \mathcal{M}(t)  \leq    \mathcal{R}_{1}(t)\Lambda_2(t)+\mathcal{R}_{2}(t),\quad   \forall\,t\in[t_0(\mathcal{B}),+\infty).
\label{2menergy}
\ee
with
\begin{align*}
&\Lambda_1(t)=\frac12\|\nabla \bm{v}\|^{2}+\frac{1}{2}\|\nabla \mu\|^{2}+(\bm{v} \cdot \nabla \varphi, \mu)+\frac{1}{2}\left\|\nabla\sigma\right\|^{2},
\notag\\
&\mathcal{M}(t) =\frac{\eta_{*}}{8}\|\bm{S} \bm{v}\|^{2}+\frac{3\varpi}{4}\|\partial_{t} \bm{v}\|^{2} +\frac{B}{2}\|\nabla \partial_{t} \varphi\|^{2} +\frac{1}{2}\|\Delta \sigma\|^2,\\
&\mathcal{R}_{1}(t) = C\big(1+\|\nabla \bm{v}\|^{2}+\|\varphi\|_{H^{2}}^{4}+\|\nabla \sigma\|^2\big),\\
&\Lambda_2(t)=1+\|\nabla \bm{v}\|^{2}+\|\nabla \mu\|^{2}+\|\nabla\sigma\|^2,\\
&\mathcal{R}_{2}(t) =C\big(1+\left\|\nabla\mu \right\|^2 +\left\| M(x,\varphi)\right\|^2+\|\bm{v}\|^2+\left\| \partial_{t} \sigma\right\|_{(H^1)'}^2 + \left\|\Delta  \varphi\right\|^{2} +\left\|S\right\|^{2}\big),
\end{align*}
where the constant $C$ depends on the coefficients of the system, $\Omega$,  $\overline{\sigma_0}$, $S_0$. Recalling
 $$\sup _{0 \leq t <+\infty}\left\|\varphi(t)\right\|_{L^{\infty}} \leqslant 1,$$ 
we imply that
\be\begin{aligned}
	|(\bm{v} \cdot \nabla \varphi, \mu)|
	&=|(\bm{v}, \varphi \nabla  \mu)| \leq 2\left\|\bm{v}\right\|^2+\frac{1}{4}\left\|\nabla \mu\right\|^{2}.
\end{aligned}
\ee
 Therefore, it can be implied from the standard Gronwall lemma yields a local estimate for $\Lambda_1(t)$ on $[t_0(\mathcal{B}),t_0(\mathcal{B})+1]$. From the uniform Gronwall lemma (see \cite[Chapter III, Lemma 1.1]{T}), \eqref{uni1} and \eqref{uni2}, we deduce that $\Lambda_1(t)\leq C$ for all $t\geq t_0(\mathcal{B})+1$. By these two estimates we obtain that $\Lambda_1(t)$ is uniformly bounded on the time interval $[t_0(\mathcal{B})+1,+\infty)$. Thus, we have
\begin{align}
\|\bm{v}(t)\|_{\bm{H}^1}+\|\mu(t)\|_{H^1}+\|\sigma(t)\|_{H^1}\leq C_9,\label{ener}\\
\int_t^{t+1}\| \bm{v}(s)\|^{2}_{\bm{H}^2}+\|\partial_{t} \bm{v}(s)\|^{2} +\| \partial_{t} \varphi(s)\|^{2}_{H^1} +\| \sigma(s)\|^2_{H^2}\,ds\le C, \quad \forall\, t\geq t_0(\mathcal{B})+1,\label{ener1}
\end{align}
where the positive constant $C$ is independent of $t$, may depend on the coefficients of the system, $\Omega$,  $\overline{\sigma_0}$, $S_0$.

\textbf{Separation property}. We refer to \cite{A2009,CG,GGM2017,GGW,GMT} for the following elliptic estimate that for any $q\in [2,+\infty)$, there exists a positive constant $C=C(q,A,B,\Omega)$ satisfying
\be
\begin{aligned}
	&\|\varphi\|_{W^{2, q}}+\left\|\varPsi_0^{\prime}(\varphi)\right\|_{L^q} \leq C\left(\|\mu\|_{H^1}+\|\varphi\|_{H^1}+\|\sigma\|_{H^1}\right) ,\\
	&\left\|\varPsi_0^{\prime \prime}(\varphi)\right\|_{L^p} \leq C\left(1+e^{C \| \mu+\theta_0 \varphi+\chi\sigma\|_{H^1} ^2}\right).
\end{aligned}
\ee
Thus, using \eqref{ener}, for any $q\in [2,+\infty)$, there exists $C$ such that
\be
\begin{aligned}
	&\|\varphi(t)\|_{W^{2, q}}+\left\|\varPsi_0^{\prime}(\varphi(t))\right\|_{L^q} \leq C, \quad \forall\, t \in [t_0(\mathcal{B})+1,+\infty) ,\\
	&\left\|\varPsi_0^{\prime \prime}(\varphi(t))\right\|_{L^p}\le C,\quad \forall\, t \in [t_0(\mathcal{B})+1,+\infty) .
\end{aligned}
\ee
As $\partial_{x_i} \varPsi_0^{\prime}(\varphi)=\varPsi_0^{\prime \prime}(\varphi) \partial_{x_i} \varphi$ in the distributional sense (see, e.g., \cite{H1}), we obtain
$$
\left\|\varPsi_0^{\prime}(\varphi(t))\right\|_{W^{1,3}(\Omega)} \leq C, \quad \forall t\in [t_0(\mathcal{B})+1,+\infty).
$$
Thanks to \cite[Lemma 3.2]{H1}, there exists a constant $\widehat{\delta}\in (0,1)$ satisfying the uniform separation property holds:
\be
\|\varphi(t)\|_{\mathcal{C}(\bar{\Omega})} \leq 1-\widehat{\delta}, \quad \forall t\in [t_0(\mathcal{B})+1,+\infty).\label{sepa}
\ee
Finally, as $-B \Delta \varphi=\mu-A\varPsi^{\prime}(\varphi)+\chi\sigma$, we infer from \eqref{ener} that there exists a constant $C$ independent of the initial condition satisfying
\be
\|\varphi(t)\|_{H^3} \leq C, \quad \forall t \geq [t_0(\mathcal{B})+1,+\infty).\label{h3}
\ee
Finally, a comparison in the equation for $\sigma$ yields
\begin{align}
	\int_t^{t+1} \|\partial_t\sigma(s)\|^2\,ds
	&\leq C\int_t^{t+1} \big(\|\Delta\sigma(s)\|^2
	+\|\Delta\varphi(s)\|^2 + \|\bm{v}(s)\|_{\bm{L}^6}^2 \|\nabla\sigma(s)\|_{\bm{L}^3}^2\big)\,ds\notag \\
	&\leq C\int_t^{t+1}\big(\|\sigma(s)\|_{H^2}^2
	+\|\varphi(s)\|_{H^2}^2+ \|\bm{v}(s)\|_{\bm{H}^1}^2 \|\sigma(s)\|_{H^2}^2\big)\,ds\notag\\
	&\leq C.\label{vsigmt2d}
\end{align}
 \textbf{Higher-order estimates}.
Applying
$$
\partial_t^h u:=\frac{1}{h}(u(t+h)-u(t)), \quad \text { for } h>0,
$$   
 to \eqref{f1.a}, we have  
   \be 
   \partial_t \partial_t^h\varphi+\partial_t^h\bm{v} \cdot \nabla \varphi+\bm{v} \cdot \nabla \partial_t^h\varphi=\Delta\partial_t^h \mu-\partial_t^hM(x,\varphi).\label{phit}
   \ee
   Testing \eqref{phit} with $\partial_t^h\varphi$, we arrive at
   \begin{equation}
   	\begin{aligned}
   		\frac{d}{d t} & \frac{1}{2}\|\partial_t^h \varphi\|^2+B\|\Delta \partial_t^h \varphi\|^2 \\
   		& =\frac{A}{h}\left(\varPsi_0^{\prime}(\varphi(t+h))-\varPsi_0^{\prime}(\varphi(t)), \Delta \partial_t^h \varphi\right)+\theta_0\|\nabla \partial_t^h \varphi\|^2-\left(\partial_t^h M(\varphi), \partial_t^h \varphi\right) \\
   		&\quad-\left(\partial_t^h\bm{v} \cdot \nabla \varphi, \partial_t^h \varphi\right)-\left(\bm{v} \cdot \nabla \partial_t^h\varphi, \partial_t^h \varphi\right)-\left(\chi\Delta\partial_t^h\sigma, \partial_t^h \varphi\right).\label{phit-b}
   	\end{aligned}
   \end{equation}
  We notice that
  
  \begin{equation}
  	\begin{aligned}
  		& \frac{A}{h}\left(\varPsi_0^{\prime}(\varphi(t+h))-\varPsi_0^{\prime}(\varphi(t)), \Delta \partial_t^h \varphi\right) \\
  		& \quad=A\left(\int_0^1 \varPsi_0^{\prime \prime}(z \varphi(t+h)+(1-z) \varphi(t)) \mathrm{d} z \partial_t^h \varphi, \Delta \partial_t^h u\right) \\
  		& \quad \leq A\int_{\Omega} \sup_{s\in[-1+\delta,1-\delta]}\varPsi_0^{\prime \prime}(s)|\partial_t^h \varphi | | \Delta \partial_t^h \varphi| \,d x \\
  		& \quad \leq C\|\partial_t^h \varphi\|^2+\frac{B}{4}\|\Delta \partial_t^h \varphi\|^2 ,
  	\end{aligned}
  \end{equation}
  and
  \be
  -\left(\partial_t^h M(\varphi), \partial_t^h \varphi\right)\le C\|\partial_t^h \varphi\|^2.
  \ee
  It can be implied that
  \be
  -\left(\chi\Delta\partial_t^h\sigma, \partial_t^h \varphi\right)\le C\|\partial_t^h \sigma\|^2+\frac{B}{4}\|\Delta \partial_t^h \varphi\|^2,
  \ee
  \be
  -\left(\partial_t^h\bm{v} \cdot \nabla \varphi, \partial_t^h \varphi\right)=\left(\partial_t^h\bm{v}  \varphi, \nabla\partial_t^h \varphi\right)\le C\|\partial_t^h\bm{v}\|^2+C\|\nabla\partial_t^h \varphi\|^2,
  \ee
  and
  \be-\left(\bm{v} \cdot \nabla \partial_t^h\varphi, \partial_t^h \varphi\right)=0.\label{phit-e}
  \ee
  From \eqref{phit-b}--\eqref{phit-e}, we observe that
  \be 
  	\frac{d}{d t}  \frac{1}{2}\|\partial_t^h \varphi\|^2+B\|\Delta \partial_t^h \varphi\|^2\le C\|\partial_t^h \varphi\|^2+C(\|\partial_t^h\bm{v}\|^2+\|\nabla\partial_t^h \varphi\|^2+\|\partial_t^h \sigma\|^2).
  \ee
   Applying the uniform Gronwall lemma (see \cite[Chapter III, Lemma 1.1]{T}) and passing to the limit $h \to0$, it can be implied from elliptic regularity that
   
  \begin{align}
  	\|\partial_{t} \varphi(t)\|^{2}+\int_t^{t+1} \| \partial_{t} \varphi(s)\|^{2}_{H^2} \,ds\le C, \quad \forall\, t\geq t_0(\mathcal{B})+2,
  \end{align}  
  
  From \eqref{f1.a}, we observe that
  $$
  \|\Delta \mu\|\leq \|\partial_t\varphi\|+\|\bm{v}\|_{\bm{L}^3}\|\nabla\varphi\|_{\bm{L}^6} +\|M(x,\varphi)\|,
  $$
   thus we can imply from the Sobolev embedding theorem that
  \begin{align}
  	&\|\mu(t)\|_{H^2}\leq C,\quad \forall\, t\geq t_0(\mathcal{B})+2,\label{v}
  \end{align}
   
   \textbf{Step 3.} Let $\left\{\left(\boldsymbol{v}_{0, n}, \varphi_{0, n},\sigma_{0,n}\right)\right\}_{n \in \mathbb{N}}$ be a sequence in $\mathcal{X}_{m_1,m_2}$ and $\left(\boldsymbol{v}_0, \varphi_0,\sigma_0\right) \in \mathcal{X}_{m_1,m_2}$ such that $\left(\boldsymbol{v}_{0, n}, \varphi_{0, n},\sigma_{0,n}\right) \rightarrow\left(\boldsymbol{v}_0, \varphi_0,\sigma_0\right)$ in $\mathcal{X}_{m_1,m_2}$.   Assume that $\left(\boldsymbol{v}_n(t), \varphi_n(t),\sigma_n(t)\right)=S(t)\left(\boldsymbol{v}_{0, n}, \varphi_{0, n},\sigma_{0,n}\right)$ and $(\boldsymbol{v}, \varphi,\sigma)(t)=S(t)\left(\boldsymbol{v}_0, \varphi_0,\sigma_0\right)$. Let  $t^*>0$ be an any fixed constant. From the assumption, there exists a positive constant $M_0$ satisfying $\left\|\boldsymbol{v}_{0 ,n}\right\|+\left\|\varphi_{0, n}\right\|_{H^1}+\left\|\sigma_{0, n}\right\| \leq M_0$ and $\left\|\boldsymbol{v}_0\right\|+\left\|\varphi_0\right\|_{H^1}+\left\|\sigma_0\right\| \leq M_0$. Then we observe that
   \be
   \int_{\frac{t}{2}}^{\frac{3t}{4}}\left\|\boldsymbol{v}_n(s)\right\|_{\bm{H}^1}+\left\|\varphi_n(s)\right\|_{H^2}+\left\|\sigma_n(s)\right\|_{H^1}\,ds \le C,
   \ee
   where the positive constant $C$ may depend on the coefficients of the system, $\Omega$, $\overline{\sigma_0}$, $S_0$ and $M_0$, and $t$. Thus, there exists $\tau\in(\frac{t}{2},\frac{3t}{4})$ such that
   $$
   \|\bm{v}(\tau)\|_{\bm{H}^1}+\|\varphi(\tau)\|_{H^2}+\|\sigma(\tau)\|_{H^1}\leq C,
   $$
    where the positive constant $C$ may depend on the coefficients of the system, $\Omega$, $\overline{\sigma_0}$, $S_0$ and $M_0$, and $t$.
    Taking $(\bm{v}(\tau),\varphi(\tau),\sigma(\tau))$ as the initial data and recalling \eqref{2menergy}, there exists $M_1$ depending only on depend on the coefficients of the system, $\Omega$, $\overline{\sigma_0}$, $S_0$ and $M_0$, and $t$ such that $\left\|\boldsymbol{v}_n(t)\right\|_{\bm{H}^1}+\left\|\varphi_n(t)\right\|_{H^2}+\left\|\sigma_n(t)\right\|_{H^1} \leq M_1$ and $\|\boldsymbol{v}(t)\|_{\bm{H}^1}+\|\varphi(t)\|_{H^2}+\left\|\sigma(t)\right\|_{H^1} \leq M_1$. From \eqref{uni}, we imply that there exists $M_2$ depending only on the coefficients of the system, $\Omega$, $\overline{\sigma_0}$, $S_0$ and $M_0$ satisfying
\be
W(t)\le M_2\left(\frac{W(0)}{M_2}\right )^{\exp\big(-M_2\int_{0}^{t}Z(s)\, ds\big)},\quad \forall\, t\in [0,T],\label{uni4}
\ee
where
\begin{align*}
W(t)&= \frac{1}{2}\|\nabla\bm{S}^{-1}[\bm{v}(t)-\bm{v}_n(t)]\|^2+\frac{1}{2}\|\varphi(t)-\varphi_n(t) \|_{(H^1)'}^2  +\frac{1}{2}\|\sigma(t)-\sigma_n(t) \|_{(H^1)'}^2\\
&\quad +|\overline{\varphi}(t)-\overline{\varphi}_n(t)|,\\
Z(t)&= \|\nabla \bm{v}(t)\|^2 +\|\nabla \bm{v}_{n}(t)\|^2 +\|\varphi(t)\|_{W^{2,3}}^2 +\|\varphi_{n}(t)\|_{W^{2,3}}^2 +\|\varphi(t)\|_{H^{2}}^4\notag\\
&\quad +\|\varPsi'(\varphi)\|_{L^1}+\|\varPsi'(\varphi_{n})\|_{L^1}+\|\sigma_n(t)\|_{H^1}^2+1,
\end{align*}
 Then we notice that $\int_0^{t^*} Z(s) \mathrm{d} s \leq M_3$, where $M_3$ depends on $M_0, t^*$ and the coefficients of the system, but is independent of $n$. Taking $n$ sufficiently large such that $\|\nabla\bm{S}^{-1}[\bm{v}(t)-\bm{v}_n(t)]\|^2+\|\varphi(t)-\varphi_n(t) \|_{(H^1)'}^2  +\|\sigma(t)-\sigma_n(t) \|_{(H^1)'}^2<1$,  we deduce from a standard interpolation argument (cf. e.g., \cite{GT}) that
\be
\begin{aligned}
&\|\bm{v}(t)-\bm{v}_n(t)\|+\|\varphi(t)-\varphi_n(t) \|_{H^1} +\|\sigma(t)-\sigma_n(t) \| \\
&\quad\leq C\left(\|\nabla\bm{S}^{-1}[\bm{v}(t)-\bm{v}_n(t)]\|^{\frac12} +\|\varphi(t)-\varphi_n(t) \|_{(H^1)'}^{\frac14}  +\|\sigma(t)-\sigma_n(t) \|_{(H^1)'}^{\frac12}\right) \\
&\qquad \times\left(\|\bm{v}(t)-\bm{v}_n(t)\|_{\bm{H}^1}^{\frac12} +\|\varphi(t)-\varphi_n(t) \|_{H^2}^{\frac34}  +\|\sigma(t)-\sigma_n(t) \|_{H^1}^{\frac12}\right) \\
&\quad\leq C\left(2M_2^{\frac{1}{4}}+M_2^{\frac{1}{8}}\right)\left(2M_1^{\frac{1}{2}}+M_1^{\frac{3}{4}}\right)\\
&\qquad\times\left(\frac{\|\nabla\bm{S}^{-1}[\bm{v}(t)-\bm{v}_n(t)]\|^2+\|\varphi(t)-\varphi_n(t) \|_{(H^1)'}^2  +\|\sigma(t)-\sigma_n(t) \|_{(H^1)'}^2}{M_2}\right)^{\frac{1}{4} \mathrm{e}^{-M_4}},\label{conti}
\end{aligned}
\ee
where the constant $C$ is independent of $n$. Hence, we obtain that $\mathcal{S}(t)\in C(\mathcal{X}_{m_1,m_2}, \mathcal{X}_{m_1,m_2})$ for every $t\geq 0$. As a consequence, $\mathcal{S}(t)$ is actually a strongly continuous
semigroup on $\mathcal{X}_{m_1,m_2}$.

\textbf{Step 4.} Due to the above facts, we apply the abstract theory of infinite dimensional dynamical systems (see e.g., \cite[Chapter 1, Theorem 1.1]{T}) and conclude that there exists a global attractor $\mathcal{A}_{m_1,m_2}\subset \mathcal{X}_{m_1,m_2}$ for the dynamical system  $(\mathcal{S}(t),\mathcal{X}_{m_1,m_2})$ defined by problem \eqref{f1.a}--\eqref{ini0}. $\mathcal{A}_{m_1,m_2}$ is bounded in $\bm{H}^1(\Omega)\times H^2(\Omega)\times H^1(\Omega)$.

\textbf{Step 5.} We now prove that the global attractor fulfills further regularity  properties. 
Thus for any $(\bm{v},\varphi,\sigma)\in\mathcal{A}_{m_1,m_2}$, we obtain
   \be
   \|\varphi\|_{H^3}\le C,\quad\|\varphi\|_{\mathcal{C}(\bar{\Omega})} \leq 1-\widehat{\delta}.\label{sep11}
   \ee
The proof of Theorem \ref{attr} is complete.
\hfill$\square$
\section{Exponential attractor} \label{phs}
\setcounter{equation}{0}
\textbf{Proof of Theorem \ref{eattr}}. The separation property \eqref{sep11} plays a significant role in the proof of the exponential attractor, as we can overcome the singularity at $\pm1$ of the derivatives of the potential $\varPsi$. We  adapt the general framework in \cite{EFNT} (see also \cite{EMZ00,MZ08}) and employ the abstract conclusions in \cite{BG,GGMP06} to prove Theorem \ref{eattr}. \medskip

\textbf{Step 1}. By deducing an improved dissipativity of the semigroup $\mathcal{S}(t)$, we prove that there exists a compact absorbing set in $\mathcal{X}_{m_1,m_2}$.
\begin{lemma}\label{compatt}
	{\color{black}{There exist $R_1>0$ and $\delta_1\in (0,1)$ such that the ball
			\be
			\begin{aligned}
			\mathcal{B}_1 =&\{ (\bm{v},\varphi,\sigma)\in\mathcal{X}_{m_1,m_2}\cap (\bm{H}^1(\Omega)\times H^2(\Omega)\times H^1(\Omega)):\\
			&\quad\|\bm{v}\|_{\bm{H}^1(\Omega)}+\|\varphi\|_{H^2(\Omega)}+\|\sigma\|_{H^1(\Omega)}\le R_1,\ \|\varphi\|_{L^{\infty}(\Omega)}\le 1-\delta_1\}\label{me}
			\end{aligned}
			\ee
			is a compact absorbing set for $\mathcal{S}(t)$ in $\mathcal{X}_{m_1,m_2}$, that is, for
			every bounded set $\mathcal{B} \subset \mathcal{X}_{m_1,m_2}$, there exists a time $t_1 = t_1(\mathcal{B}) > 0$ satisfying $\mathcal{S}(t)\mathcal{B}\subset \mathcal{B}_1$, for all $t\geq t_1$.
	}}
\end{lemma}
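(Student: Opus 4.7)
\textbf{Proof plan for Lemma \ref{compatt}.} The claim is essentially a repackaging of the uniform higher-order estimates already obtained in Steps 2 and 5 of the proof of Theorem \ref{attr}. The plan is to track constants carefully so that, once the trajectory has entered the $\mathcal{X}_{m_1,m_2}$-absorbing ball $\mathcal{B}_0$ from \eqref{abso1}, every further bound depends only on coefficients, on $\Omega$, on $m_1,m_2$, and on $R_0$, but not on the initial data.

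\textbf{Step 1 (regularity bounds).} Starting from a bounded set $\mathcal{B}\subset\mathcal{X}_{m_1,m_2}$, I would invoke \eqref{abso1} at time $t_0(\mathcal{B})$, so that for $t\ge t_0(\mathcal{B})$ the triple $\mathcal{S}(t)\mathcal{B}$ sits inside $\mathcal{B}_0$. Then the differential inequality \eqref{2menergy} combined with the uniform Gronwall lemma and the integral estimates \eqref{uni1}, \eqref{uni2} yields \eqref{ener}, i.e.\ a uniform bound
$\|\bm v(t)\|_{\bm H^1}+\|\mu(t)\|_{H^1}+\|\sigma(t)\|_{H^1}\le C_9$
for every $t\ge t_0(\mathcal{B})+1$, with $C_9$ independent of the initial datum.

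\textbf{Step 2 ($H^2$-bound for $\varphi$ and separation).} Next I would invoke the elliptic estimates recalled just after \eqref{ener} in Step 5 of Theorem \ref{attr}: writing \eqref{f4.d} as $-B\Delta\varphi+A\varPsi'_0(\varphi)=\mu+A\theta_0\varphi+\chi\sigma$ and testing with appropriate truncations gives
$\|\varphi(t)\|_{W^{2,q}}+\|\varPsi_0'(\varphi(t))\|_{L^q}\le C$
for every $q\in[2,\infty)$, uniformly for $t\ge t_0(\mathcal{B})+1$. From here, the separation result \cite[Lemma 3.2]{H1} applies (its hypotheses being precisely the uniform $L^q$ bounds on $\varPsi_0''(\varphi)$ guaranteed by (H3) together with the $H^1$-bound on $\mu+\theta_0\varphi+\chi\sigma$), producing a constant $\widehat\delta\in(0,1)$, independent of the initial data, such that $\|\varphi(t)\|_{\mathcal C(\bar\Omega)}\le 1-\widehat\delta$ for all $t\ge t_0(\mathcal{B})+1$; see \eqref{sepa}.

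\textbf{Step 3 (assembling $\mathcal{B}_1$ and checking compactness).} I now set $\delta_1=\widehat\delta$ and choose $R_1$ to be (any constant exceeding) the sum of the absolute constants produced in Steps 1 and 2, and define $t_1(\mathcal{B}):=t_0(\mathcal{B})+1$. By construction $\mathcal{S}(t)\mathcal{B}\subset \mathcal{B}_1$ for every $t\ge t_1(\mathcal{B})$, which is the absorbing property. Compactness of $\mathcal{B}_1$ in $\mathcal{X}_{m_1,m_2}=\bm L^2\times H^1\times L^2$ follows immediately from the Rellich--Kondrachov compact embeddings $\bm H^1\hookrightarrow\bm L^2$, $H^2\hookrightarrow H^1$, $H^1\hookrightarrow L^2$, since $\mathcal{B}_1$ is a bounded subset of $\bm H^1\times H^2\times H^1$ and the additional pointwise constraints $\|\varphi\|_{L^\infty}\le 1-\delta_1$, $|\overline\varphi|\le m_1$, $|\overline\sigma|\le m_2$ are preserved under $\bm L^2\times H^1\times L^2$-convergence.

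\textbf{Expected main obstacle.} The only non-trivial ingredient is the strict separation $\|\varphi\|_{\mathcal C(\bar\Omega)}\le 1-\widehat\delta$ with $\widehat\delta$ independent of the initial datum; this is the place where (H3) and the two-dimensional Trudinger--Moser-type argument of \cite[Lemma 3.2]{H1} are essential. Everything else is a bookkeeping exercise on the uniform bounds already derived in Section \ref{ws}.
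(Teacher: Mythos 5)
Your proposal is correct and follows essentially the same route as the paper: absorb into $\mathcal{B}_0$, then invoke the uniform higher-order bound \eqref{ener}, the elliptic/separation estimates \eqref{sepa} (and \eqref{h3}), and take $t_1(\mathcal{B})=t_0(\mathcal{B})+1$. Your Step 3 merely makes explicit the compactness of $\mathcal{B}_1$ via Rellich--Kondrachov and closedness of the constraints, which the paper leaves implicit.
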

\begin{proof}  {\color{black}{
			As we have proved that for
			every bounded set $\mathcal{B} \subset \mathcal{X}_{m_1,m_2}$ there exists an absorbing set $\mathcal{B}_0\subset \mathcal{X}_{m_1,m_2}$ (see \eqref{abso1}), we now need to investigate the evolution after the time $t_0=t_0(\mathcal{B})$, namely, starting from the set $\mathcal{B}_0$. For any initial data $(\bm{v}_0,\varphi_0,\sigma_0)\in \mathcal{B}_0$, with $\|\bm{v}_0\|+\|\varphi_0\|_{H^1}+\|\sigma_0\|\leq R_0$, it can be implied from the estimate \eqref{ener}, \eqref{sepa} and \eqref{h3} that the corresponding unique global weak solution $(\bm{v},\varphi, \sigma)$ fulfills
			\begin{equation}
			\|\bm{v}(t)\|_{\bm{H}^1}+\|\varphi(t)\|_{H^2}+\|\sigma(t)\|_{H^1}\leq R_1\quad \text{and}\ \ \|\varphi(t)\|_{L^\infty(\Omega)}\leq 1-\delta_1,\quad \forall\, t\geq 1,\label{sep}
			\end{equation}
			where the constants $R_1>0$ and $\delta_1\in (0,1)$ depend on coefficients of the system, $R_0$, $m_1$, $m_2$ and  $\Omega$. Thus, for every bounded set $\mathcal{B} \subset \mathcal{X}_{m_1,m_2}$, it can be indicated by taking $t_1(\mathcal{B}) = t_0(\mathcal{B})+1$ that for the ball $\mathcal{B}_1$ defined above, it holds  $\mathcal{S}(t)\mathcal{B}\subset \mathcal{B}_1$ when $t\geq t_1$.
	}}
\end{proof}
 \textbf{Step 2}. {\color{black}{
		From the definition of the absorbing set $\mathcal{B}_1$, it can be implied that there exists some $t_1(\mathcal{B}_1)>0$ satisfying $\mathcal{S}(t)\mathcal{B}_1\subset \mathcal{B}_1$, for any $t\geq t_1(\mathcal{B}_1)$. We define
		\be
		\begin{aligned}
		\mathcal{B}^*=\bigcup_{t\geq t_1(\mathcal{B}_1)}\mathcal{S}(t) \mathcal{B}_1,\quad\widetilde{\mathcal{B}}_1=\overline{\mathcal{B}^*}^{\bm{L}^2\times H^1\times L^2}.
		\end{aligned}
	\ee
		It is easy to prove that $\mathcal{B}^*\subset \mathcal{B}_1$. To this end, for any $(\bm{v}^*,\varphi^*, \sigma^*)\in \mathcal{B}^*$, there exist certain $(\bm{v}_0,\varphi_0, \sigma_0)\in \mathcal{B}_1$ and $t^*\geq t_1(\mathcal{B}_1)$ such that $(\bm{v}^*,\varphi^*, \sigma^*)=\mathcal{S}(t^*)(\bm{v}_0,\varphi_0, \sigma_0)\in \mathcal{B}_1$. Furthermore, for any $(\bm{v}^*,\varphi^*, \sigma^*)\in \mathcal{B}^*$, $\mathcal{S}(t)(\bm{v}^*,\varphi^*, \sigma^*)=\mathcal{S}(t)\mathcal{S}(t^*)(\bm{v}_0,\varphi_0, \sigma_0)=\mathcal{S}(t+t^*)(\bm{v}_0,\varphi_0, \sigma_0)\in \mathcal{B}^*$, which implies 
		$ \mathcal{S}(t) \mathcal{B}^*\subset \mathcal{B}^* $ for all $t\geq 0$. 
		Therefore, $\mathcal{B}^*$ is positively invariant in $\mathcal{X}_{m_1,m_2}$ satisfying $\mathcal{S}(t)\mathcal{B}^*\subset \mathcal{B}^*$ for all $t\geq 0$ and is relatively compact in $\mathcal{X}_{m_1,m_2}$.
		 As $\widetilde{\mathcal{B}}_1$ is a closed subset of $\mathcal{B}_1$, $\widetilde{\mathcal{B}}_1$ is also compact in $\mathcal{X}_{m_1,m_2}$. Furthermore, $\widetilde{\mathcal{B}}_1$ is also positively invariant under $\mathcal{S}(t)$. We have proved that $\mathcal{S}(t)$ is a strongly continuous
		 semigroup on $\mathcal{X}_{m_1,m_2}$ (see \eqref{conti}), thus for all $t\geq 0$, $\mathcal{S}(t)\widetilde{\mathcal{B}}_1\subset \overline{\mathcal{S}(t)\mathcal{B}^*} \subset \overline{\mathcal{B}^*}=\widetilde{\mathcal{B}}_1$ (with the corresponding closure taken in $\bm{L}^2(\Omega)\times H^1(\Omega)\times L^2(\Omega)$). As $\mathcal{S}(t)\mathcal{B}_1\subset \widetilde{\mathcal{B}}_1$ for all $t\geq t_1(\mathcal{B}_1)$, then  $\widetilde{\mathcal{B}}_1$ is a compact absorbing set for $\mathcal{S}(t)$.
		
		We observe that
		for any $(\bm{v},\varphi, \sigma)$ in $\widetilde{\mathcal{B}}_1$, it holds
		\begin{equation}
		\|\bm{v}\|_{\bm{H}^1}+\|\varphi\|_{H^2}+\|\sigma\|_{H^1}\leq R_1,\quad \text{and}\ \ \|\varphi\|_{L^\infty(\Omega)}\leq 1-\delta_1,\label{sep1}
		\end{equation}
		with the constants $R_1>0$ and $\delta_1\in (0,1)$. Combining with \eqref{h3}, it is easy to see that $\widetilde{\mathcal{B}}_1$ is bounded in $\bm{H}^1(\Omega)\times H^3(\Omega)\times H^1(\Omega)$.
		 In particular, as $\varphi$ satisfies the uniform separation property, then the singular potential $\varPsi(\varphi)$ can be considered as a globally Lipschitz smooth potential function. This fact admits us to prove the following smoothing properties:
		\bl\label{continu}
		The mapping $(t, (\bm{v}_0,\varphi_0,\sigma_0))\mapsto \mathcal{S}(t)(\bm{v}_0,\varphi_0,\sigma_0): [0, T] \times \widetilde{\mathcal{B}}_1 \to \widetilde{\mathcal{B}}_1$ is $\frac12$-H\"{o}lder continuous
		in time and Lipschitz continuous with respect to the initial data, when $\widetilde{\mathcal{B}}_1$ is endowed with the $\bm{L}^2(\Omega)\times H^1(\Omega)\times L^2(\Omega)$-topology. Furthermore, Assume that $(\bm{v}_i,\varphi_i,\sigma_i)$, $i=1,2$ are two global weak solutions to problem \eqref{f1.a}--\eqref{ini0} corresponding to the initial data $(\bm{v}_{0i},\varphi_{0i},\sigma_{0i}) \in \widetilde{\mathcal{B}}_1$, respectively. The difference of solutions satisfies the following smoothing estimate:
		\be
		\begin{aligned}
		&\| \bm{v}_1(t)-\bm{v}_2(t)\|_{\bm{H}^1}+\| \varphi_1(t)-\varphi_2(t)\|_{H^2}+\| \sigma_1(t)-\sigma_2(t)\|_{H^1}\\
		&\quad\leq C_T\left(\frac{1+t}{t}\right)(\| \bm{v}_{01}-\bm{v}_{02}\|+\|\varphi_{01}-\varphi_{02}\|_{H^1}+\|\sigma_{01}-\sigma_{02}\|),
		\label{smooth}
		\end{aligned}
	\ee
for any $t\in (0,T]$. Here, the positive constant $C_T$	depends on  the constants $R_1>0$, coefficients of the system, $R_0$, $\Omega$ and $T$.
		\el
\begin{proof}
 In the following proof, the symbols $C_i'$, $i\in \mathbb{N}$, denote generic positive constants that may depend on  the constants $R_1>0$, coefficients of the system, $R_0$, $\Omega$ and $T$. Their values may change from line to line.
			 
Denote the differences of functions by
\begin{align*}
&(\bm{v},p, \varphi,\mu,\sigma)
			=(\bm{v}_{1}-\bm{v}_{2},p_1-p_2, \varphi_{1}-\varphi_{2},\mu_1-\mu_2,\sigma_{1}-\sigma_{2}),\\
			&(\bm{v}_0, \varphi_0, \sigma_0)
			=(\bm{v}_{01}-\bm{v}_{02}, \varphi_{01}-\varphi_{02}, \sigma_{01}-\sigma_{02}).
			\end{align*}
			Then it holds
			\begin{subequations}
				\begin{alignat}{3}
				&\partial_t  \bm{ v}+\bm{ v}_1 \cdot \nabla  \bm {v} +\bm{ v} \cdot \nabla  \bm {v}_2-\textrm{div} (  2\eta(\varphi_1) D\bm{v} )-\textrm{div} \big(  2(\eta(\varphi_1)-\eta(\varphi_2)) D\bm{v}_2 \big)+\nabla p\notag\\
				&\quad=(\mu_1+\chi \sigma_1)\nabla \varphi+(\mu+\chi \sigma)\nabla \varphi_2\label{2atest33.c},\\
				&\partial_t  \varphi+\bm{v}_{1} \cdot \nabla \varphi+\bm{v}\cdot \nabla \varphi_{2} =\Delta \mu -a(x)(m(\varphi_1)-m(\varphi_2)),\label{2atest11.a} \\
				&\mu=A\varPsi'(\varphi_{1})-A\varPsi'(\varphi_{2})-B\Delta \varphi-\chi \sigma, \label{2atest44.d}\\
				&\partial_t  \sigma+\bm{v}_{1} \cdot \nabla \sigma+\bm{v} \cdot \nabla \sigma_{2}-\Delta \sigma  = -\chi\Delta \varphi, \label{2atest22.b}
				\end{alignat}
			\end{subequations}
			almost everywhere in $\Omega\times (0,T)$, and
			\begin{subequations}
				\begin{alignat}{3}
				&\bm{v}=\mathbf{0},\quad {\partial}_{\bm{n}}\varphi={\partial}_{\bm{n}}\sigma={\partial}_{\bm{n}}\mu=0, \qquad\qquad\qquad\qquad \qquad\qquad\ \textrm{on} \   \partial\Omega\times(0,T),\\
				&\bm{v}|_{t=0}=\bm{0},\quad \varphi|_{t=0}=0,\quad  \sigma|_{t=0}=0,\qquad\qquad\qquad \qquad\qquad\quad \textrm{in} \   \Omega.
				\end{alignat}
			\end{subequations}
			Multiplying \eqref{2atest33.c} by $\bm{v}$ and integrating over $\Omega$, we find
			\begin{align}
			\frac{1}{2} \frac{d}{dt}\|\boldsymbol{v}\|^{2} +2\big(\eta\left(\varphi_{1}\right) D \boldsymbol{v}, D \boldsymbol{v}\big)=J_1+J_2+J_3,
			\label{vt1}
			\end{align}
			where
			\begin{equation*}
			\begin{aligned}
			J_{1} &=-\left(\boldsymbol{v}_1 \cdot \nabla \boldsymbol{v}, \boldsymbol{v}\right) -\left(\boldsymbol{v} \cdot \nabla \boldsymbol{v}_{2}, \boldsymbol{v}\right)=-\left(\boldsymbol{v} \cdot \nabla \boldsymbol{v}_{2}, \boldsymbol{v}\right), \\ J_{2}&=-2\left(\left(\eta\left(\varphi_{1}\right)-\eta\left(\varphi_{2}\right)\right) D \boldsymbol{v}_{2}, \nabla \boldsymbol{v}\right), \\
			J_3&=((\mu_1+\chi \sigma_1)\nabla \varphi,\bm{v})+((\mu+\chi \sigma)\nabla \varphi_2,\bm{v}).
			\end{aligned}
			\end{equation*}
			From \cite[Section 5]{H1}, we have
			\be
			\begin{aligned}
			&\|\boldsymbol{v}_1(t)-\boldsymbol{v}_2(t)\|^{2}+\|\varphi_1(t)-\varphi_2(t)\|_{H^1}^{2} +\|\sigma_1(t)-\sigma_2(t)\|^{2}\\ &\quad+\int_{0}^t\Big(\|\boldsymbol{v}_1(s)-\boldsymbol{v}_2(s)\|^{2}_{\bm{H}^1} +\|\varphi_1(s)-\varphi_2(s)\|_{H^3}^{2} +\|\sigma_1(s)-\sigma_2(s)\|^{2}_{H^1}\Big)\, d s\\
			&\qquad\le C_1'\Big( \|\boldsymbol{v}_{0,1}-\boldsymbol{v}_{0,2}\|^{2} +\|\varphi_{0,1}-\varphi_{0,2}\|_{H^1}^{2}+\|\sigma_{0,1}-\sigma_{0,2}\|^{2}\Big),\quad \forall\,t\in[0,T].\label{bd1}
			\end{aligned}
		\ee
			
			Then we prove the smoothing estimate for the difference of solutions.
			Multiplying \eqref{2atest33.c} by $\bm{S}\bm{v}$ and integrating over $\Omega$, we obtain
			\be
			\begin{aligned}
				\frac{1}{2} \frac{d}{d t}\|\nabla \bm{v}\|^2&=\int_{\Omega}\textrm{div} (2  \eta(\varphi) D\bm{v} )\cdot\bm{S} \bm{v}\,dx -\int_{\Omega}\left(\bm{v}_1 \cdot \nabla\right) \bm{v} \cdot \bm{S} \bm{v} \,d x-\int_{\Omega}(\bm{v} \cdot \nabla) \bm{v}_2 \cdot \bm{S} \bm{v} \,d x\\
				&\quad+\int_{\Omega}2 \operatorname{div}\left(\left(\eta\left(\varphi_1\right)-\eta\left(\varphi_2\right)\right) D \bm{v}_2\right) \cdot \bm{S} \bm{v} \,d x+\int_{\Omega} (\mu_1+\chi\sigma_1) \nabla \varphi \cdot \bm{S} \bm{v} \,d x \\
				&\qquad+\int_{\Omega} (\mu+\chi\sigma) \nabla \varphi_2 \cdot \bm{S} \bm{v} \,d x .
			\end{aligned}
			\ee
			By Lemma \ref{stokes} for the Stokes operator $\bm{S}$, there exists $p \in L^2\left(0, T ; H^1(\Omega)\right)$ satisfying $-\Delta \boldsymbol{v}+\nabla p=\bm{S}\boldsymbol{v}$ and
			\be
			\|p\| \leq C\|\nabla \boldsymbol{v}\|^{\frac{1}{2}}\|\bm{S}\boldsymbol{v}\|^{\frac{1}{2}}, \quad\|p\|_{H^1} \leq C\|\bm{S}\boldsymbol{v}\|.\label{stoke}
			\ee
			Thus, we have
			$$
			\begin{aligned}
			&\int_{\Omega}\textrm{div} (2  \eta(\varphi) D\bm{v} )\cdot\bm{S} \bm{v}\,dx\\
			&\quad=\int_{\Omega} \eta(\varphi)\Delta \boldsymbol{v} \cdot \bm{S} \boldsymbol{v} \,d x+\int_{\Omega} 2\eta^{\prime}\left(\varphi_1\right)\left(D \bm{v} \nabla \varphi_1\right) \cdot \bm{S} \bm{v} \,dx \\
			&\quad=-\int_{\Omega} \eta(\varphi)|\bm{S} \boldsymbol{v}|^2 d\, x+\int_{\Omega} \eta(\varphi)\nabla p \cdot \bm{S} \boldsymbol{v} \,d x+\int_{\Omega} 2\eta^{\prime}\left(\varphi_1\right)\left(D \bm{v} \nabla \varphi_1\right) \cdot \bm{S} \bm{v} \,dx \\
			&\quad=-\int_{\Omega} \eta(\varphi)|\bm{S} \boldsymbol{v}|^2 \,d x-\int_{\Omega} \eta^{\prime}(\varphi) p \nabla \varphi \cdot \bm{S} \boldsymbol{v} \,d x+\int_{\Omega} 2\eta^{\prime}\left(\varphi_1\right)\left(D \bm{v} \nabla \varphi_1\right) \cdot \bm{S} \bm{v} \,dx.
			\end{aligned}
			$$
			We deduce from Young's inequalities and the Sobolev embedding theorem
			\be
			\begin{aligned}
				\left|-\int_{\Omega}\left(\bm{v}_1 \cdot \nabla\right) \bm{v} \cdot \bm{S} \bm{v} \,dx\right| & \leq\left\|\bm{v}_1\right\|_{L^4}\|\nabla \bm{v}\|_{L^4}\|\bm{S} \bm{v}\| \\
				& \leq C \|\nabla \bm{v}\|^{\frac{1}{2}}\|\bm{S} \bm{v}\|^{\frac{3}{2}} \\
				& \leq \frac{\eta_*}{8}\|\bm{S} \bm{v}\|^2+C_{2}^{\prime}\|\nabla \bm{v}\|^2.
			\end{aligned}
		\ee
	It can be implied from Agmon's inequality
		\begin{equation}
		\begin{aligned}
		\left|-\int_{\Omega}(\bm{v} \cdot \nabla) \bm{v}_2 \cdot \bm{S} \bm{v} \,dx\right| & \leq\|\bm{v}\|_{L^{\infty}}\left\|\nabla \bm{v}_2\right\|\|\bm{S} \bm{v}\| \\
		& \leq C \|\nabla \bm{v}\|^{\frac{1}{2}}\|\bm{S} \bm{v}\|^{\frac{3}{2}} \\
		& \leq \frac{\eta_*}{8}\|\bm{S} \bm{v}\|^2+C_{3}^{\prime}\|\nabla \bm{v}\|^2.
		\end{aligned}
		\end{equation}
		Noticing the separation property \eqref{sep1} for $\varphi$ and the assumption (H1), we deduce (see \cite{GT})
		\begin{equation}
		\begin{aligned}
		&\left|\int_{\Omega} 2\operatorname{div}\left(\left(\eta\left(\varphi_1\right)-\eta\left(\varphi_2\right)\right) D \bm{v}_2\right) \cdot \bm{S} \bm{v} \,dx\right| \\
		&\quad\leq \left|\int_{\Omega} (\eta\left(\varphi_1\right)-\eta\left(\varphi_2\right)) \Delta \bm{v}_2 \cdot \bm{S} \bm{v} \,dx\right| \\
		&\qquad+\mid \int_{\Omega}2(\eta^{\prime}(\varphi_1) D \bm{v}_2 \nabla \varphi+(\eta^{\prime}(\varphi_1)-\eta^{\prime}(\varphi_2)) D \bm{v}_2 \nabla \varphi_2) \cdot \bm{S} \bm{v} \,dx \mid\\
		&\quad\leq  C\|\varphi\|_{L^{\infty}}\|\Delta \boldsymbol{v}_2\|\|\bm{S} \boldsymbol{v}\|+C\|D \boldsymbol{v}_2\|_{L^4}\|\nabla \varphi\|_{L^4}\|\bm{S} \boldsymbol{v}\| \\
		&\qquad+C\|\varphi\|_{L^{\infty}}\left\|D \boldsymbol{v}_2\right\|\left\|\nabla \varphi_2\right\|_{L^{\infty}}\|\bm{S} \boldsymbol{v}\|\\
		&\quad\leq  \frac{\eta_*}{8}\|\bm{S} \bm{v}\|^2+C_{4}^{\prime}\left(1+\left\|\bm{S} \bm{v}_2\right\|^2\right)\|\Delta \varphi\|^2 .
		\end{aligned}
		\end{equation}
		We infer from Agmon's inequality 
		\begin{equation}
		\begin{aligned}
		&\left|\int_{\Omega} (\mu_1+\chi\sigma_1) \nabla \varphi \cdot \bm{S} \bm{v} \,dx+\int_{\Omega} (\mu+\chi\sigma) \nabla \varphi_2 \cdot \bm{S} \bm{v} \,dx\right| \\
		&\quad=\left|\int_{\Omega} \operatorname{div}\left(\nabla \varphi_1 \otimes \nabla \varphi+\nabla \varphi \otimes \nabla \varphi_2\right) \cdot \bm{S} \bm{v} \,dx\right| \\
		&\quad\leq C\left(\left\|\nabla \varphi_1\right\|_{L^{\infty}}+\left\|\nabla \varphi_2\right\|_{L^{\infty}}\right)\|\Delta \varphi\|\|\bm{S} \bm{v}\| \\
		&\qquad+C\left(\left\|\varphi_1\right\|_{W^{2,4}}+\left\|\varphi_2\right\|_{W^{2,4}}\right)\|\nabla \varphi\|_{L^4}\|\bm{S} \bm{v}\| \\
		&\quad\leq \frac{\eta_*}{8}\|\bm{S} \bm{v}\|^2+C_{5}^{\prime}\|\Delta \varphi\|^2,
		\end{aligned}
		\end{equation}
		and
		$$
		\begin{aligned}
		\left|\int_{\Omega} \eta^{\prime}\left(\varphi_1\right)\left(D \bm{v} \nabla \varphi_1\right) \cdot \bm{S} \bm{v} \,dx\right| & \leq C\|D \bm{v}\|\left\|\nabla \varphi_1\right\|_{L^{\infty}}\|\bm{S} \bm{v}\| \\
		& \leq \frac{\eta_*}{8}\|\bm{S} \bm{v}\|^2+C_6^{\prime}\|\nabla \bm{v}\|^2.
		\end{aligned}
		$$
		Recalling \eqref{stoke}, we obtain
		$$
		\begin{aligned}
		\left|-\int_{\Omega} \eta^{\prime}\left(\varphi_1\right) p \nabla \varphi_1 \cdot \bm{S} \bm{v} \,dx\right| & \leq C\|p\|\left\|\nabla \varphi_1\right\|_{L^{\infty}}\|\bm{S} \bm{v}\| \\
		& \leq C\|\nabla \bm{v}\|^{\frac{1}{2}}\|\bm{S} \bm{v}\|^{\frac{3}{2}} \\
		& \leq \frac{\eta_*}{8}\|\bm{S} \bm{v}\|^2+C_7^{\prime}\|\nabla \bm{v}\|^2.
		\end{aligned}
		$$
		Collecting all the above estimates together, we end up with the differential inequality
		$$
		\begin{aligned} 
		&\frac{d}{d t}\frac{1}{2}\|\nabla \bm{v}\| +\frac12\int_{\Omega} \eta(\varphi)|\bm{S} \boldsymbol{v}|^2 \,d x\\
		&\leq C_8^{\prime}\left(1+\left\|\bm{S} \bm{v}_2\right\|^2\right)\left(\frac{1}{2}\|\nabla \bm{v}\|^2+\frac{1}{2}\|\Delta \varphi\|^2\right) .
		\end{aligned}
		$$	
		Applying the classical elliptic estimate to the Neumann problem \eqref{2atest44.d},  $\partial_{\bm{n}} \mu=0$ on $\partial \Omega$ implies that $\partial_{\bm{n}} \Delta \varphi=0$ on $\partial \Omega$. Multiplying \eqref{2atest11.a} by $ \Delta^2 \varphi$ and integrating over $\Omega$, we derive that
		\be
		\begin{aligned}
		&\frac{1}{2} \frac{d}{d t}\|\Delta \varphi\|^2+\int_{\Omega}B\left|\Delta^2 \varphi\right|^2 \,d x\\
		&\quad=\int_{\Omega} A\Delta\left(\varPsi^{\prime}\left(\varphi_1\right)-\varPsi^{\prime}\left(\varphi_2\right)\right) \Delta^2 \varphi \,dx+\int_{\Omega} \nabla\left(\boldsymbol{v}_1 \cdot \nabla \varphi\right) \cdot \nabla \Delta \varphi \,dx \\
		&\qquad+\int_{\Omega} \nabla\left(\boldsymbol{v} \cdot \nabla \varphi_2\right) \cdot \nabla \Delta \varphi \,dx-\int_{\Omega} \chi \Delta\sigma \Delta^2 \varphi \,dx-\int_{\Omega}(a(x)(m(\varphi_1)-a(x)(m(\varphi_2))\Delta^2 \varphi\,d x .
		\end{aligned}
		\ee
		As $\varphi$ satisfies the separation property \eqref{sep} and $\varPsi\in C^{4}(-1,1)$, we deduce 
	   $$
		\begin{aligned}
		&\left|A\int_{\Omega} \Delta\left(\varPsi^{\prime}\left(\varphi_1\right)-\varPsi^{\prime}\left(\varphi_2\right)\right) \Delta^2 \varphi \,dx\right|\\
		 &\quad= \int_{\Omega}A(\varPsi^{\prime \prime}\left(\varphi_1\right) \Delta \varphi_1+\varPsi^{\prime \prime\prime}|\nabla\varphi_1|^2-\varPsi^{\prime \prime}\left(\varphi_1\right) \Delta \varphi_1-\varPsi^{\prime \prime\prime}|\nabla\varphi_1|^2)\Delta^2 \varphi \,dx \\
		&\quad\leq  \int_{\Omega}A\left|\left(\varPsi^{\prime \prime}\left(\varphi_1\right) \Delta \varphi+\left(\varPsi^{\prime \prime}\left(\varphi_1\right)-\varPsi^{\prime \prime}\left(\varphi_2\right)\right) \Delta \varphi_2\right) \Delta^2 \varphi\right| \,dx \\
		&\qquad+\int_{\Omega}A\left|\left(\varPsi^{\prime \prime \prime}\left(\varphi_1\right)\left(\left|\nabla \varphi_1\right|^2-\left|\nabla \varphi_2\right|^2\right)+\left(\varPsi^{\prime \prime \prime}\left(\varphi_1\right)-\varPsi^{\prime \prime \prime}\left(\varphi_2\right)\right)\left|\nabla \varphi_2\right|^2\right) \Delta^2 \varphi\right| \,dx \\
		&\quad\leq  C\|\Delta \varphi\|\left\|\Delta^2 \varphi\right\| +C\sup_{s\in[1-\delta_1,1+\delta_1]}\|\varPsi^{\prime \prime \prime}(s)\|_{L^{\infty}}\|\varphi\|_{L^{\infty}}\left\|\Delta \varphi_2\right\|\left\|\Delta^2 \varphi\right\| \\
		&\qquad+C\left(\left\|\nabla \varphi_1\right\|_{L^{\infty}}+\left\|\nabla \varphi_2\right\|_{L^{\infty}}\right)\|\nabla \varphi\|\left\|\Delta^2 \varphi\right\|\\ &\qquad\quad+C\sup_{s\in[1-\delta_1,1+\delta_1]}\|\varPsi^{\prime \prime\prime \prime}(s)\|_{L^{\infty}}\|\varphi\|_{L^{\infty}}\left\|\nabla \varphi_2\right\|_{L^{\infty}}^2\left\|\Delta^2 \varphi\right\|\\
		&\quad\le \frac{B}{8}\left\|\Delta^2 \varphi\right\|^2+C_9^{\prime}\|\Delta \varphi\|^2.
		\end{aligned}
		$$	
		It can be deduced from the Sobolev embedding theorem
		$$
		\begin{aligned}
		&\quad\left|\int_{\Omega} \nabla\left(\boldsymbol{v}_1 \cdot \nabla \varphi\right) \cdot \nabla \Delta \varphi \,dx\right| \\
		&\quad \leq\left\|\nabla \boldsymbol{v}_1\right\|\|\nabla \varphi\|_{L^4}\|\nabla \Delta \varphi\|_{L^4}+\left\|\boldsymbol{v}_1\right\|_{L^4}\|\varphi\|_{H^2}\|\nabla \Delta \varphi\|_{L^4} \\
		&\quad \leq C\|\Delta \varphi\|\left\|\Delta^2 \varphi\right\| \\
		&\quad \leq \frac{B}{8}\left\|\Delta^2 \varphi\right\|^2+C_{10}^{\prime}\|\Delta \varphi\|^2, 
		\end{aligned}
		$$
		 and 
		 $$
		 \begin{aligned}
		&\left|\int_{\Omega} \nabla\left(\boldsymbol{v} \cdot \nabla \varphi_2\right) \cdot \nabla \Delta \varphi \,dx\right| \\
		&\quad \leq\|\nabla \boldsymbol{v}\|\left\|\nabla \varphi_2\right\|_{L^{\infty}}\|\nabla \Delta \varphi\|+\|\boldsymbol{v}\|_{L^4}\left\|\varphi_2\right\|_{W^{2,4}}\|\nabla \Delta \varphi\| \\
		&\quad \leq \frac{B}{8}\left\|\Delta^2 \varphi\right\|^2+C_{11}'\|\nabla \boldsymbol{v}\|^2.
		\end{aligned}
		$$	
		Recalling (H3), we notice that
		\begin{align}
		-\int_{\Omega}(a(x)(m(\varphi_1)-a(x)(m(\varphi_2))\Delta^2 \varphi\,d x&\le \|a(x)(m(\varphi_1)-a(x)(m(\varphi_2)\|\|\Delta^2 \varphi\|\notag\\
		&\le \frac{B}{8}\left\|\Delta^2 \varphi\right\|^2+C\|\varphi\|^2.
		\end{align} 
		We deduce from Young's inequality that
		\be
		-\int_{\Omega} \chi \Delta\sigma \Delta^2 \varphi \,dx\le \frac{B}{8}\|\Delta^2 \varphi\|+\frac{4\chi^2}{B}\|\Delta\sigma\|^2.
		\ee
		From the above estimates together, we obtain
		\be
		\begin{aligned}
		&\frac{d}{d t}\left(\frac{1}{2}\|\nabla \boldsymbol{v}\|^2+\frac{1}{2}\|\Delta \varphi\|^2\right)+\frac{B}{2}\|\Delta^2 \varphi\|^2 \\
		&\leq C_{12}'\left(1+\left\|\mathbf{S} \boldsymbol{v}_2\right\|^2\right)\left(\frac{1}{2}\|\nabla \boldsymbol{v}\|^2+\frac{1}{2}\|\Delta \varphi\|^2\right)+\frac{4\chi^2}{B}\|\Delta\sigma\|^2.\label{me1}
		\end{aligned}
		\ee
			Multiplying \eqref{2atest22.b} by $-\Delta \sigma$ and integrating over $\Omega$, we obtain
			\be
			\begin{aligned}
			\frac{1}{2}\frac{d}{dt}\|\nabla \sigma\|^2+\|\Delta \sigma\|^2
			&=\chi \int_\Omega \Delta  \varphi \Delta \sigma\, dx\\
			&\leq \frac{1}{2}\|\Delta \sigma\|^2+ \frac{\chi^2}{2} \|\Delta   \varphi\|^2. \label{ddiffd1}
			\end{aligned}
		    \ee
			Multiplying \eqref{ddiffd1} by $\frac{B}{16\chi^2}$ (as the decoupling case $\chi=0$ is indeed easier, we only need to consider $\chi\neq 0$), adding \eqref{me1} and  \eqref{ddiffd1} together, we obtain
			\begin{align}
			&\frac12\frac{d}{dt} \Big(\frac{B}{16\chi^2}\|\nabla\bm{v}\|^2+\frac{B}{16\chi^2}\|\Delta \varphi\|^2+ \|\nabla \sigma\|^2 \Big)+\frac{B^2}{32\chi^2}\|\Delta^2 \varphi\|^2
			+ \frac14\| \Delta \sigma\|^2
			\notag\\
			&\quad \leq  C_{13}'\left(1+\left\|\bm{S} \bm{v}_2\right\|^2\right)\left(\frac{B}{16\chi^2}\|\nabla\bm{v}\|^2+\frac{B}{16\chi^2}\|\Delta \varphi\|^2+ \|\nabla \sigma\|^2\right) .
			\label{ddiffe}
			\end{align}
			As a consequence, we infer that
			\be
\begin{aligned}
&\frac12\frac{d}{dt} t\Big(\frac{B}{16\chi^2}\|\nabla\bm{v}\|^2+\frac{B}{16\chi^2}\|\Delta \varphi\|^2+ \|\nabla \sigma\|^2 \Big) +t(\frac{B^2}{32\chi^2}\|\Delta^2 \varphi\|^2
+ \frac14\| \Delta \sigma\|^2)
\\
&\quad \leq  C_{14}'t\left(1+\left\|\bm{S} \bm{v}_2\right\|^2\right)\left(\frac{B}{16\chi^2}\|\nabla\bm{v}\|^2+\frac{B}{16\chi^2}\|\Delta \varphi\|^2+ \|\nabla \sigma\|^2\right)\\
&\qquad+\frac12\Big(\frac{B}{16\chi^2}\|\nabla\bm{v}\|^2+\frac{B}{16\chi^2}\|\Delta \varphi\|^2+ \|\nabla \sigma\|^2 \Big) .
\label{ddiffe1}
\end{aligned}
\ee
			It can be implied from the Gronwall lemma that
			\be
			\begin{aligned}
			&\frac{t}{2}\Big(\frac{B}{16\chi^2}\|\nabla\bm{v}\|^2+\frac{B}{16\chi^2}\|\Delta \varphi\|^2+ \|\nabla \sigma\|^2 \Big) \\
			&\leq C_{15}' \int_0^{t}(s+1)\Big(\frac{B}{16\chi^2}\|\nabla\bm{v}\|^2+\frac{B}{16\chi^2}\|\Delta \varphi\|^2+ \|\nabla \sigma\|^2 \Big)\, d s,\label{eq}
			\end{aligned}
			\ee
			From \eqref{bd1} and \eqref{eq}, we prove the smoothing estimate:
			\be
			\begin{aligned}
				&\| \bm{v}_1(t)-\bm{v}_2(t)\|_{\bm{H}^1}+\| \varphi_1(t)-\varphi_2(t)\|_{H^2}+\| \sigma_1(t)-\sigma_2(t)\|_{H^1}\\
				&\quad\leq C_T\left(\frac{1+t}{t}\right)(\| \bm{v}_{01}-\bm{v}_{02}\|+\|\varphi_{01}-\varphi_{02}\|_{H^1}+\|\sigma_{01}-\sigma_{02}\|),
			\end{aligned}
			\ee
			for any $t\in (0,T]$. Here, the positive constant $C_T$	depends on  the constants $R_1>0$, coefficients of the system, $R_0$, $\Omega$ and $T$.
			For any $t\in (0,T]$, there exists a constant $C^*$ depending on the coefficients of the system, norms of the initial data, $\Omega$, $T$ and $t$ such that
			\be
			\begin{aligned}
			&\left\|S\left(t\right)\left(\boldsymbol{v}_{01}, \varphi_{01},\sigma_{01}\right)-S\left(t\right)\left(\boldsymbol{v}_{02}, \varphi_{02},\sigma_{02}\right)\right\|_{\bm{H}^1(\Omega) \times H^2(\Omega)\times H^1(\Omega)} \\
			&\quad\leq C^*\left\|\left(\boldsymbol{v}_{01}, \varphi_{01},\sigma_{01}\right)-\left(\boldsymbol{v}_{02}, \varphi_{02},\sigma_{02}\right)\right\|_{\mathcal{X}_{m_1,m_2}} .
			\end{aligned}
		\ee
			The proof is complete.
		\end{proof}
	\textbf{Step 3}. {\color{black}{
			Employing the abstract result \cite[Lemma 5.3]{BG} (we cite to \cite[Section 2]{GGMP06} for the more general version), we can imply from Lemma \ref{compatt} and Lemma \ref{continu} that there exists
			a closed bounded set $\mathcal{M}_{m_1,m_2}\subset\widetilde{\mathcal{B}}_1$, which is of finite fractal dimension in $\mathcal{X}_{m_1,m_2}$ and  positively invariant under $\mathcal{S}(t)$.
			Furthermore, there exist constants $\omega_0>0$ and
			$J_0>0$ such that
			\begin{align}
			\mathrm{dist}_{\bm{L}^2\times H^1\times L^2}(\mathcal{S}(t)\widetilde{\mathcal{B}}_1,\mathcal{M}_{m_1,m_2})\leq J_0 e^{-\omega_0 t},\quad \forall\, t\geq 0.\notag
			\end{align}
			For every bounded set $\mathcal{B} \subset \mathcal{X}_{m_1,m_2}$, there exists $t_2=t_2(\mathcal{B})>0$ satisfying $\mathcal{S}(t)\mathcal{B}\subset \widetilde{\mathcal{B}}_1$ for all $t\geq t_2(\mathcal{B})$ (notice that $\widetilde{\mathcal{B}}_1$ itself is a compact absorbing set). Thus,
			$$
			\mathrm{dist}_{\bm{L}^2\times H^1\times L^2}(\mathcal{S}(t)\mathcal{B},\mathcal{M}_{m_1,m_2})\leq J_0  e^{-\omega_0 [t-t_2(\mathcal{B})]},\quad \forall\, t\geq t_2(\mathcal{B}).
			$$
			It can be deduced from the above estimates and the boundedness of $\mathcal{M}_{m_1,m_2}$ that
			\begin{align*}
			&\mathrm{dist}_{\bm{L}^2\times H^1\times L^2}(\mathcal{S}(t)\mathcal{B},\mathcal{M}_{m_1,m_2})\\
			&\quad\leq \sup_{(\bm{v},\varphi,\sigma)\in \mathcal{B}}\sup_{t\in[0,\,t_2(\mathcal{B})]}\|\mathcal{S}(t)(\bm{v},\varphi,\sigma)\|_{\bm{L}^2\times H^1\times L^2}
			+ \sup_{(\bm{v},\varphi,\sigma)\in \mathcal{M}_{m_1,m_2}}\|(\bm{v},\varphi,\sigma)\|_{\bm{L}^2\times H^1\times L^2}\\
			&\quad\leq C_{\mathcal{B}},\quad \forall\, t\in[0,t_2(\mathcal{B})).
			\end{align*}
			Here, the constant $C_\mathcal{B}>0$ depends on the size of $\mathcal{B}$, $R_1$, $\Omega$, $m_1$, $m_2$ and coefficients of the system.
			As a consequence, for every bounded set $\mathcal{B} \subset \mathcal{X}_{m_1,m_2}$,  it holds
			\begin{align}
			\mathrm{dist}_{\bm{L}^2\times H^1\times L^2}(\mathcal{S}(t)\mathcal{B},\mathcal{M}_{m_1,m_2})\leq \big(\max\{J_0, C_\mathcal{B}\}e^{\omega_0t_2(\mathcal{B})}\big) e^{-\omega_0 t},\quad \forall\, t\geq 0.\notag
			\end{align}
			This implies that $\mathcal{M}_{m_1,m_2}$ attracts exponentially fast all bounded subsets $\mathcal{B}\subset \mathcal{X}_{m_1,m_2}$, that is,
			its basin of exponential attraction is the whole phase space $\mathcal{X}_{m_1,m_2}$.
			
			The proof of Theorem \ref{eattr} is complete. \hfill $\square$
}}

\section*{Acknowledgments}
\noindent
The author would like to thank Professor H. Wu for helpful discussions. The author is partially supported by Zhejiang Provincial Natural Science
Foundation of China (Grant Number LQ24A010011).


%
\end{document}